\theoremstyle {definition} \newtheorem {defi} {Definition} [section]
\theoremstyle {definition} 
\theoremstyle {plain}  \newtheorem {thm} [defi] {Theorem}
\theoremstyle {plain}  
\theoremstyle {plain}  \newtheorem {cor} [defi]{Corollary}
\theoremstyle {plain} \newtheorem {prop} [defi]{Proposition}
\theoremstyle {plain} \newtheorem {lem}[defi] {Lemma}
\theoremstyle {definition}  
\theoremstyle {definition}
\newcommand{\ddt}{\frac{\partial}{\partial t}}
\newcommand{\R}{\mathbb{R}}
\newcommand{\oo}{\omega}
\newcommand{\ddd}{\overrightarrow{\Delta}}
\newcommand{\tm}{\Lambda^1T^*M}
\newcommand{\tmk}{\Lambda^kT^*M}
\def\11{1\!\!1}
\def\f12{\frac{1}{2}}
\title{$L^p$-estimates for the heat semigroup on differential forms, and related problems}
\author{Jocelyn Magniez and El Maati Ouhabaz}
\begin{document}

\date{}
\maketitle

\noindent \begin{abstract} We consider a complete non-compact Riemannian manifold satisfying the volume doubling property and a Gaussian upper bound for its heat kernel (on functions). Let $\ddd_k $ be the Hodge-de Rham Laplacian  on differential $k$-forms with $k \ge 1$.  By the Bochner decomposition formula $\ddd_k = \nabla^* \nabla + R_k$. Under the assumption that the negative part $R_k^-$ is in an enlarged Kato class, we prove that for all $p \in [1, \infty]$, 
$\| e^{-t\ddd_k}\|_{p-p} \le C ( t \log t)^{\frac{D}{4}(1- \frac{2}{p})}$ (for large $t$). This estimate can be improved if $R_k^-$ is strongly sub-critical. In general, $(e^{-t\ddd_k})_{t>0}$ is not uniformly bounded on $L^p$ for any $p \not= 2$. 
We also prove the gradient estimate  $\|\nabla e^{-t\Delta}\|_{p-p} \le C t^{-\frac{1}{p}}$, where $\Delta$ is the Laplace-Beltrami operator (acting on functions).  Finally we discuss heat kernel bounds on forms and the Riesz transform on $L^p$ for $p > 2$. 
\end{abstract}

\section{Introduction and main results}\label{section51}

\noindent Let $(M,g)$ be a complete non-compact Riemannian manifold of dimension $N$, where $g$ denotes a Riemannian metric on $M$, 
Let $\rho$ and $\mu$ be the Riemannian distance and measure associated with $g$, respectively. We suppose that $M$ satisfies the volume doubling property, that is there exists constants $C,D>0$ such that 
\begin{equation}\tag{D}\label{D3}
v(x,\lambda r)\le C\lambda^{D}v(x,r),\;\forall x\in M,\forall r\ge 0, \forall\lambda\ge 1,
\end{equation}
\noindent where $v(x,r)=\mu(B(x,r))$ denotes the volume of the ball $B(x,r)$ of center $x$ and radius $r$.  It is a standard fact that this property is equivalent to the following one

\begin{equation*}
v(x,2 r)\le Cv(x,r),\;\forall x\in M,\forall r\ge 0.
\end{equation*}

\medskip

\noindent Let $\Delta$ be (the non-negative) Laplace-Beltrami operator and 
 $(e^{-t\Delta})_{t\ge 0}$  the associated heat semigroup.   It is a well known fact that  $(e^{-t\Delta})_{t\ge0}$ is a submarkovian semigroup. In particular, it acts as a contraction semigroup on $L^p(M)$ for all $p \in [1, \infty]$, i.e.,
 \begin{equation}\label{eq0}
 \|e^{-t\Delta}\|_{p-p} \le 1 \ \forall t\ge 0,\forall p\in [1,\infty].
 \end{equation}
The semigroup is strongly  continuous on $L^p(M)$ for $ 1 \le p < \infty$. 

 Now we introduce the Hodge-de Rham Laplacian $\ddd_k := d_k^*d_k + d_{k-1}d_{k-1}^*$ where $d_k$ denotes  the exterior derivative on differential $k$-forms and $d_k^*$ its formal adjoint. For $k=1$ we use the notation $\ddd$ instead of $\ddd_1$. The operator
 $\ddd_k$ is self-adjoint on $L^2(\tmk)$ where $\tmk$ denotes the space of smooth differential $k$-forms on $M$. Since $\ddd_k$ is non-negative, 
  $(e^{-t \ddd_k})_{t \ge0}$  is a contraction semigroup on $L^2(\tmk)$. One of the main questions we address in this paper concerns $L^p$-estimates of this semigroup. We formulate this as follows. 
 
  \vspace{.5cm}
  \underline{Question}: Suppose that $k \ge 1$. Does the semigroup  $(e^{-t \ddd_k})_{t \ge0}$ extend from $L^2(\tmk) \cap L^p(\tmk)$ to  $L^p(\tmk)$ and what is the behavior of its norm
  $\| e^{-t\ddd_k}\|_{p-p}$  on $L^p(\tmk)$ for {\it all or some} $p \not= 2$ ?
  
  \vspace{.5cm} 
The answer to this question is  intimately related to the geometry of the manifold $M$.  While it is not hard  to prove in a quite general setting  that $(e^{-t \ddd_k})_{t \ge0}$ extends to a strongly continuous semigroup on $L^p(\tmk)$, the precise estimate of its $L^p$-norm  turns to be  very complicated. We believe that  uniform boundedness ({\it w.r.t.}  $t$), at least when $k = 1$,  is related to deep questions such as boundedness on $L^p(M)$ of the Riesz transform (on functions) $d\Delta^{-1/2}$ on $L^p(M)$ (with value in $L^p(\tm)$).   We shall see that this is indeed the case under an additional assumption on $M$,  and  in general $(e^{-t \ddd})_{t \ge0}$ is not uniformly bounded on $L^p(\tm)$ for any $p \not= 2$. 
 
 According to  Bochner's  formula, $\ddd_k =\nabla^*\nabla+ R_k$ 
where  $\nabla$ denotes  the Levi-Civita connection and $R_k$ is a symmetric section of ${\rm End}(\tmk)$. For differential forms of order $1$, 
$\ddd =\nabla^*\nabla+ R$ where $R$ is identified with the Ricci curvature. The above formula allows  us to look at $\ddd_k$ as  a Schr\"odinger operator with the vector-valued "potential" $R_k$. Regarding $L^p$ estimates of $e^{-t\ddd}$ (or $e^{-t \ddd_k}$)  it is expected that the difficulty occurs in the setting of manifolds whose Ricci curvature has a nontrivial negative part. This is what happens with Schr\"odinger operators on functions  with potentials having a non-trivial negative part. When $k = 1$, the link of the previous question to the geometry of $M$ is promptly done via  the negative part of its Ricci curvature. The same observation can be made when $k \ge 2$. 
 
 Let now $p(t,x,y)$ be  the heat kernel on functions (the heat kernel of the Laplace-Beltrami operator $\Delta$). We assume throughout this paper that 
 $p(t,x,y)$  satisfies a Gaussian upper bound 

\begin{equation}\tag{G}\label{G3}
p(t,x,y)\le\frac{C}{v(x,\sqrt{t})}\exp\left(-c\frac{\rho^2(x,y)}{t}\right) \  \forall t> 0,
\end{equation}
where  $C$ and $c$ are positive constants. We denote by $H$ the operator $\nabla^*\nabla+R_k^+$ where $R_k^+$ denotes the positive part of $R_k$. It is a self-adjoint operator (defined by quadratic form method). The well known domination property  says that (in the pointwise sense)
\begin{equation}\label{eq1}
| e^{-tH} \omega | \le e^{-t\Delta} | \omega|
\end{equation}
for all $t > 0$ and $w \in L^2(\tmk)$. Therefore, it follows from \eqref{eq0} that  the semigroup $e^{-tH}$ acts as a contraction semigroup on $L^p(\tmk)$ for all $p \in [1, \infty)$. It also follows from (G) that the heat kernel $h_t(x,y)$ of $H$ satisfies a Gaussian upper bound. 
The operator $\ddd_k$ can be seen as the perturbation of $H$ by the negative "potential" $-R_k^-$, i.e., $\ddd = H - R_k^-$. In order to make this precise (using for example the method of quadratic forms to deal with this perturbation) we assume that $R_k^-$ is in the {\it enlarged Kato class} $\hat{K}$ which we introduce now.

\begin{defi}
We say that a function $f \in \hat{K}$ if there exists $\alpha>0$ such that
\begin{equation*}
\underset{x\in M}{\sup}\int_M \int_0^{\alpha}p_s(x,y) |f(y)| ds d\mu(y)<1. 
\end{equation*}
We say that $R_k^- \in \hat{K}$ if the function $x \mapsto | R_k^-(x) |_x$ belongs to $\hat{K}$. 
\end{defi}
\vspace{.2cm}

\noindent Note that $\hat{K}$ contains  the usual Kato class $K$, which is defined as the set of functions $f$ such that 
\begin{equation*}
\underset{\alpha\rightarrow 0}{\lim}\;\underset{x\in M}{\sup}\int_M \int_0^{\alpha}p_s(x,y) |f(y)| ds d\mu(y)=0. 
\end{equation*}

\noindent The Kato class $K$ plays an important role in the study of Schr\"odinger operators and their associated semigroups, see Simon \cite{Simon} and the references there. The class $\hat{K}$ appears in Voigt \cite{voigt} who studied properties of semigroups associated to Schr\"odinger operators with potential in $\hat{K}$ such as $L^p-L^q$ properties  for instance.

Note that the assumption that the  Ricci curvature is bounded from below means that  the negative part $R_-$ is bounded. In this case,
 $R_- \in \hat{K}$. Indeed,
$$\int_M \int_0^{\alpha}p_s(x,y) |R_-(y)|_y ds d\mu(y) \le \| R_- \|_\infty \int_0^\alpha \int_M p_s(x,y) d\mu(y) ds \le \alpha \| R_- \|_\infty.$$
The last inequality follows from \eqref{eq0} (with  $p= \infty$).  

Let $\overrightarrow{p_k}(t,x,y)$ denote the heat kernel of the operator $\ddd_k$.  The following is our first main result. 

\begin{thm}\label{theorem1}
\noindent Suppose that the manifold $M$ satisfies the volume doubling condition (\ref{D3}), the Gaussian upper bound  (\ref{G3}) and $R_k^- \in \hat{K}$. Then 
\begin{enumerate}[(i)]
\item the semigroup $(e^{-t \ddd_k})$ acts on $L^p(\tmk)$ for all  $p\in [1,\infty]$ and 
\begin{equation*}
\| e^{-t \ddd_k} \|_{p-p}\le C_p (t\log t)^{\left|\frac{1}{2}-\frac{1}{p}\right|\frac{D}{2}}, \ t > e.
\end{equation*}
\item For all $t\ge 1$ and $p\ge 2$
\begin{equation*}
\| \nabla  e^{-t\Delta}\|_{p-p}\le C_pt^{-\frac{1}{p}}.
\end{equation*}
\item There exists $C>0$ such that for all $t> 0$ and $x,y\in M$
\begin{equation*}
|\overrightarrow{p_k}(t,x,y)|\le C\frac{\left(1+ t+\frac{\rho^2(x,y)}{t}\right)^{\frac{D}{2}}}{v(x,\sqrt{t})^{\frac{1}{2}}v(y,\sqrt{t})^{\frac{1}{2}}}\exp\left(-\frac{\rho^2(x,y)}{4t}\right).
\end{equation*}

\item \noindent There exist $C, c >0$ such that for all $t \ge 1$ and $x,y\in M$
\begin{equation*}
|\overrightarrow{p_k}(t,x,y)|\le C\min\left(1,\frac{t^{\frac{D}{2}}}{v(x,\sqrt{t})}\right)\exp\left(-c\frac{\rho^2(x,y)}{t}\right).
\end{equation*}

\end{enumerate}
\end{thm}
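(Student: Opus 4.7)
I would prove the four parts in the order (iii), (i), (iv), (ii), since the pointwise kernel bound feeds into the $L^p$-estimate, which in turn enables the gradient bound.

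\textbf{Part (iii).} View $\ddd_k = H - R_k^-$ as a form-perturbation of $H := \nabla^*\nabla + R_k^+$. By \eqref{eq1} and \eqref{G3}, $H$ has a heat kernel $h_t(x,y)$ pointwise dominated by the scalar Gaussian; after symmetrizing via Cauchy-Schwarz and doubling one gets $|h_t(x,y)| \le C v(x,\sqrt t)^{-1/2} v(y,\sqrt t)^{-1/2}\exp(-\rho^2/4t)$. Expand
\[
e^{-t\ddd_k} \;=\; \sum_{n\ge 0}\int_{0<s_1<\cdots<s_n<t} e^{-(t-s_n)H} R_k^- e^{-(s_n-s_{n-1})H}\cdots R_k^- e^{-s_1H}\,ds,
\]
and sum the Dyson-Phillips series in $L^\infty$ using the enlarged Kato bound $\sup_x \int_0^\alpha\!\int h_s(x,y)|R_k^-(y)|_y\,ds\,d\mu(y) < 1$, iterating on intervals of length $\alpha$. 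To recover the off-diagonal factor $e^{-\rho^2/4t}$ apply Davies' twisting trick (conjugate each $e^{-sH}$ by $e^{\pm\varphi}$ with $|\nabla\varphi|\le 1$ and optimize over $\varphi$); the polynomial correction $(1+t+\rho^2/t)^{D/2}$ arises from carefully bookkeeping the growth of the $n$-fold iterated integrals against the doubling dimension.

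\textbf{Part (i).} Integrate (iii) in $y$: dyadically decompose $M = \bigsqcup_j A_j$ with $A_j = \{y : 2^j\sqrt t \le \rho(x,y) < 2^{j+1}\sqrt t\}$ and use $v(y,\sqrt t) \ge c\,v(x,\sqrt t)(1+\rho/\sqrt t)^{-D}$ from \eqref{D3}; summing over $j$ with the Gaussian tail absorbing the polynomial factors yields $\|e^{-t\ddd_k}\|_{\infty-\infty} \le C(t\log t)^{D/4}$ for $t>e$ (the logarithm appears when one is forced to spend part of the Gaussian to balance the $v(x,\sqrt t)^{-1/2}$ factor). The $L^2$-contractivity of $e^{-t\ddd_k}$ and Riesz-Thorin interpolation cover $p\in[2,\infty]$; self-adjointness of $\ddd_k$ and duality handle $p\in[1,2]$.

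\textbf{Part (iv).} For $t\ge 1$, absorb $(1+t+\rho^2/t)^{D/2}$ into $\exp(-c\rho^2/t)$ with $c<1/4$, and use \eqref{D3} to rewrite the symmetric volume factor $v(x,\sqrt t)^{-1/2}v(y,\sqrt t)^{-1/2}$ as $t^{D/2}/v(x,\sqrt t)$ times terms swallowed by the exponential. The alternative upper bound of $1$ in $\min(1,\cdot)$ comes from a Davies-Gaffney $L^2\to L^2$ off-diagonal estimate combined with a standard pointwise comparison argument.

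\textbf{Part (ii).} Use the intertwining $\nabla e^{-t\Delta} = e^{-t\ddd_1}\nabla$ arising from $d\Delta = \ddd_1 d$. For $p=2$, analyticity of $(e^{-t\Delta})$ on $L^2$ gives $\|\nabla e^{-t\Delta}\|_{2-2}\le Ct^{-1/2}$ in the standard way. For the $L^\infty$ endpoint (where $t^{-1/p}=1$), split $\nabla e^{-t\Delta} = e^{-(t-1)\ddd_1}\,\nabla e^{-\Delta}$: bound $\nabla e^{-\Delta}$ in $L^\infty$ via a short-time gradient estimate on the scalar heat kernel (which follows from \eqref{G3} by Caccioppoli/analytic-semigroup arguments at time $1$), and bound $e^{-(t-1)\ddd_1}$ on $L^\infty$ using the refined heat-kernel bound (iv). Riesz-Thorin interpolation between the $L^2$ and $L^\infty$ estimates gives $\|\nabla e^{-t\Delta}\|_{p-p}\le C_p t^{-1/p}$ for $p\ge 2$. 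The hardest step is (iii): controlling the Dyson-Phillips iterates in the pointwise Gaussian-weighted norm while simultaneously tracking off-diagonal decay, the polynomial correction, and the symmetric volume weight; the enlarged Kato hypothesis is essentially the weakest structural condition under which the Neumann series converges, which is also why the resulting $L^p$-bound is not uniform in $t$.
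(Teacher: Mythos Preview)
Your outline misses the single ingredient that drives all four estimates: the uniform $L^2$-contractivity $\|e^{-t\ddd_k}\|_{2-2}\le 1$, coming from $\ddd_k\ge 0$. In part (iii), a Dyson--Phillips expansion with the enlarged Kato bound, iterated on intervals of length $\alpha$, produces at best an exponential factor $e^{\nu t}$ (this is exactly the Miyadera estimate behind Lemma~\ref{lemma31}); Davies twisting then gives a Gaussian with $e^{\nu t}$ in front, i.e.\ Lemma~\ref{lemma32}, not the polynomial $(1+t+\rho^2/t)^{D/2}$. The paper upgrades the exponential to a polynomial precisely by feeding the twisted $L^2$ bound $\|e^{-\lambda\phi}e^{-t\ddd_k}e^{\lambda\phi}\|_{2-2}\le e^{\lambda^2 t}$ into the Davies argument (Proposition~\ref{semigaussian1}, Step~2). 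Likewise in part (i): integrating (iii) directly in $y$ yields $\|e^{-t\ddd_k}\|_{\infty-\infty}\le C(1+t)^{D/2}$, not $(t\log t)^{D/4}$. The logarithm does not come from ``spending Gaussian''; it comes from a H\"older interpolation between $\int|\overrightarrow{p_k}(t,x,y)|^2\,d\mu(y)\le C/v(x,1)$ (obtained via the semigroup property and $L^2$-contractivity) and $\int|\overrightarrow{p_k}(t,x,y)|^{p_\epsilon}\,d\mu(y)$ (obtained from (iii)), followed by optimization in $\epsilon$ (see Proposition~\ref{propouhabaz}).

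Part (ii) has a concrete error of order. Your splitting $\nabla e^{-t\Delta}=e^{-(t-1)\ddd_1}\,\nabla e^{-\Delta}$ places the long-time factor on the \emph{form} semigroup, whose $L^\infty$-norm is not uniformly bounded (indeed (i) only gives $(t\log t)^{D/4}$, and the example $M=\R^2\sharp\R^2$ shows one cannot do better in general). This destroys the $L^\infty$ endpoint. The correct splitting is the other way: $\nabla e^{-t\Delta}=\nabla e^{-\Delta}\cdot e^{-(t-1)\Delta}$, with $e^{-(t-1)\Delta}$ the \emph{scalar} semigroup, which is a genuine contraction on $L^\infty$. The remaining task is then to show $\|\nabla e^{-\Delta}\|_{\infty-\infty}<\infty$; this is not a soft Caccioppoli fact under (D)+(G) alone, but follows from the pointwise bound $|\nabla_x p(t,x,y)|\le C t^{D/2}t^{-1/2}v(x,\sqrt t)^{-1}e^{-c\rho^2/t}$, which the paper obtains by writing $\nabla_x p(t,x,\cdot)=e^{-\frac{t}{2}\ddd_1}\,d\,p(\tfrac{t}{2},\cdot,y)$, applying (iii) to $\overrightarrow{p_1}$, and using Grigor'yan's weighted $L^1$ gradient estimate \eqref{eqgra1} for the scalar kernel. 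Interpolating the resulting uniform $L^\infty$ bound with $\|\nabla e^{-t\Delta}\|_{2-2}\le Ct^{-1/2}$ then gives $t^{-1/p}$.
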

Here and throughout this paper we use the notation $| \overrightarrow{p_k}(t,x,y) |$  for the norm from $\Lambda^kT^*_yM$ to $\Lambda^kT^*_xM$  of the linear map $\overrightarrow{p_k}(t,x,y)$ between these two spaces. Thus this norm depends on the fixed points $x$ and $y$. We make this   dependence implicit. 

Note that property $(iii)$ obviously gives 
\begin{equation*}
|\overrightarrow{p_k}(t,x,y)|\le C_\epsilon \frac{(1+ t)^{\frac{D}{2}}}{v(x,\sqrt{t})^{\frac{1}{2}}v(y,\sqrt{t})^{\frac{1}{2}}}\exp\left(-\frac{\rho^2(x,y)}{(4+\epsilon)t}\right), \ t > 0
\end{equation*}
for every $\epsilon > 0$ and 
In particular, 
\[
|\overrightarrow{p_k}(t,x,y)| \le  \frac{C}{v(x,\sqrt{t})} \exp\left(-c\frac{\rho^2(x,y)}{t}\right)
\]
for all $t \in (0, 1]$ and some  constant  $c > 0$. This is the reason why we formulate $(iv)$ and several  other estimates in this paper for $ t > 1$ only. 

Similar results  as in this theorem  (when $k = 1$) have been proved by Coulhon and Zhang \cite{coulz} under additional  assumptions. More precisely, it is assumed in \cite{coulz} a  "non collapsing" property  on the volume $v(x,r)$, the Ricci curvature is bounded from below and that the negative part $V(x)$ of the lowest eigenvalue of the Ricci tensor is strongly 
sub-critical (see below).   The latter  is rather a strong assumption. We do not make any of these assumptions and our condition $R_k^- \in \hat{K}$ is fairly general.

Assertion $(i)$ gives a partial answer to the question addressed above.  We do not know whether the behavior $(t\log t)^{\left|\frac{1}{2}-\frac{1}{p}\right|\frac{D}{2}}$ is optimal in general.  If  $M = \R^2 \sharp \R^2$ (connected sum of two copies of $\R^2$) then  $(e^{-t\ddd})_{t\ge 0}$ is not uniformly bounded on $L^p(\tm)$ for any $p \not=2$. \\
The gradient estimate in assertion $(ii)$ is formulated for $p > 2$ since for $p \in (1,2]$ the Riesz transform $d \Delta^{-1/2}$ is bounded on $L^p$ (cf. Coulhon and Duong \cite{could}). Therefore 
$\| d e^{-t\Delta}\|_{p-p}\le C_pt^{-\frac{1}{2}}$. The case $p > 2$ is complicated and this latter estimate is actually equivalent to the boundedness of the Riesz transform (at least under some additional assumptions on $M$, see Auscher et al. \cite{acdh}). Hence in the general setting of our paper, we cannot hope for gradient estimates of the semigroup in terms of  $t^{-\frac{1}{2}}$ (up to a constant) for all $p > 2$. \\
If in addition to the  assumptions made in \cite{coulz} (mentioned above) it is proved in the same paper that if $V \in L^p(M)$ then 
\begin{equation} \label{eq2}
|\overrightarrow{p_1}(t,x,y)|\le C\min\left(1,\frac{t^{\alpha_p}}{v(x,\sqrt{t})}\right)\exp\left(-c\frac{\rho^2(x,y)}{t}\right)
\end{equation}
for some constant $\alpha_p$ depending on $p$ and the sub-critical constant of $V$.  In particular,  $\alpha_p \to \infty $ (as $p \to \infty$) and hence  our estimate is more precise than \eqref{eq2}.   In addition, we do not make any sub-criticality assumption and no summability condition on $V$. 
We point out that recently, Coulhon, Devyver and Sikora \cite{CDS} were able to prove the full Gaussian bound
\begin{equation} \label{eq3}
|\overrightarrow{p_k}(t,x,y)| \le  \frac{C}{v(x,\sqrt{t})} \exp\left(-c\frac{\rho^2(x,y)}{t}\right),\  t > 0
\end{equation}
under additional assumptions on the Ricci curvature among which $R_k^-$ is strongly sub-critical and  {\it small at infinity} in a precise sense (see the next section). The estimate \eqref{eq3}  was proved previously by Devyver \cite{D} under the additional assumption that $M$ satisfies a global Sobolev inequality. 

\begin{cor}\label{cor31}
\noindent Suppose the assumptions of the previous theorem are satisfied. Then for every $\varepsilon > 0$, the local Riesz transform $d(\Delta+\varepsilon)^{-\frac{1}{2}}$ is bounded on $L^p(M)$  (into $L^p(\tm)$)  for all $p\in(1,\infty)$.
\end{cor}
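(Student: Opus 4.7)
My plan is to prove the boundedness separately on $(1,2]$ and on $[2,\infty)$: on the first range the global Riesz transform $d\Delta^{-1/2}$ is already bounded (by Coulhon-Duong~\cite{could}), while on the second range I would dualize and transfer the problem to $1$-forms, where the heat kernel bounds of Theorem~\ref{theorem1} provide what is needed.

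\textbf{The case $p \in (1,2]$.} Under (D) and (G), Coulhon-Duong~\cite{could} give the boundedness of $d\Delta^{-1/2}$ from $L^p(M)$ into $L^p(\tm)$. On the dense subspace $L^2\cap L^p$ one has the factorization
\[
d(\Delta+\varepsilon)^{-1/2} = \bigl(d\Delta^{-1/2}\bigr) \circ \bigl[\Delta^{1/2}(\Delta+\varepsilon)^{-1/2}\bigr],
\]
which then extends to $L^p$ by density. The first factor is bounded on $L^p$ by Coulhon-Duong. The second factor is $\phi(\Delta)$ for the Borel function $\phi(\lambda) := \lambda^{1/2}(\lambda+\varepsilon)^{-1/2}$, bounded by $1$ on $[0,\infty)$ and vanishing at $\lambda=0$; $\phi(\Delta)$ is bounded on $L^p(M)$ by the bounded $H^\infty$-calculus of $\Delta$, which itself follows from (D) and (G).

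\textbf{The case $p \in [2,\infty)$.} I would argue by duality: boundedness of $d(\Delta+\varepsilon)^{-1/2}$ on $L^p$ is equivalent to boundedness of its adjoint $(\Delta+\varepsilon)^{-1/2}d^* : L^{p'}(\tm) \to L^{p'}(M)$ with $p' \in (1,2]$. The Hodge identity $\Delta d^* = d^* \ddd$ on $1$-forms, combined with the spectral calculus, yields $(\Delta+\varepsilon)^{-1/2}d^* = d^*(\ddd+\varepsilon)^{-1/2}$, so it is enough to show that $d^*(\ddd+\varepsilon)^{-1/2}$ is bounded on $L^{p'}(\tm)$. The $L^2$-bound is immediate from $\|d^*\omega\|_2^2 \le \langle \ddd\,\omega,\omega\rangle$. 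To extend to $p'\in(1,2)$ I would run a Calderon-Zygmund argument on the space of homogeneous type $(M,\rho,\mu)$ — available by (D) — applied to the subordination representation
\[
d^*(\ddd+\varepsilon)^{-1/2} = \frac{1}{\sqrt{\pi}}\int_0^\infty s^{-1/2}\,e^{-\varepsilon s}\,d^*e^{-s\ddd}\,ds,
\]
using the pointwise bounds on $|\overrightarrow{p_1}(t,x,y)|$ from Theorem~\ref{theorem1}(iii)--(iv). The factor $e^{-\varepsilon s}$ is what makes the argument go through: it absorbs both the polynomial factor $(1+t)^{D/2}$ in those Gaussian-type bounds and the slow growth of $\|e^{-t\ddd}\|_{p'-p'}$ from Theorem~\ref{theorem1}(i), which is precisely why the statement is restricted to the \emph{local} operator $d(\Delta+\varepsilon)^{-1/2}$.

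\textbf{Main obstacle.} The delicate work is the Calderon-Zygmund argument in the second case: since only polynomial-times-Gaussian bounds are available on the form-valued heat kernel $\overrightarrow{p_1}(t,x,y)$, one has to verify Hörmander-type off-diagonal estimates on the kernel of $d^*(\ddd+\varepsilon)^{-1/2}$, carefully balancing these polynomial losses against the $e^{-\varepsilon s}$ damping.
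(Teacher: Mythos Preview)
Your approach is essentially the same as the paper's: both treat $p\le 2$ via Coulhon--Duong~\cite{could}, and for $p>2$ both dualize, use the commutation $\Delta d^*=d^*\ddd$, and reduce to the $L^q$-boundedness ($q\in(1,2)$) of $d^*(\ddd+\varepsilon)^{-1/2}$ via singular integral theory following~\cite{coulhonduong}. The one simplification you are missing is that the ``delicate balancing'' you anticipate is unnecessary: multiplying the bound of Theorem~\ref{theorem1}(iii) by $e^{-\varepsilon t}$ shows that the heat kernel of $\ddd+\varepsilon$ satisfies a \emph{full} Gaussian upper bound (the polynomial factor $(1+t)^{D/2}$ is absorbed), so the Calder\'on--Zygmund argument of~\cite{coulhonduong} applies verbatim with no extra work.
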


As mentioned above, if $p \in (1,2]$ then the Riesz transform $d\Delta^{-1/2}$ is bounded on $L^p(M)$, see \cite{could}. The novelty here is the case $p > 2$ although we do not prove boundedness of the Riesz transform (we only treat the local one). A similar result for local Riesz transforms was proved by Bakry \cite{B} for Riemannian manifolds with Ricci curvature bounded from below (i.e., $R_- \in L^\infty$). We already mentioned that $\hat{K}$ is larger than 
$L^\infty$. \\ 
In order to prove the corollary we use follow  the ideas in \cite{coulhonduong}. By Theorem \ref{theorem1}, the heat kernel of $\varepsilon + \ddd$ has a full Gaussian bound (as in the RHS of \eqref{eq3}). This and the doubling condition imply  that $d^* (\varepsilon + \ddd)^{-1/2}$ is bounded from $L^q(\tm)$ to  $L^q(M)$ for $q \in (1, 2)$.  
Therefore by duality, $(\varepsilon + \ddd)^{-1/2} d$ is bounded from $L^p(M)$ to $L^p(\tm)$ for all $p \in (2, \infty)$. The commutation property
$\ddd d = d \Delta$ gives the desired result. We refer to  \cite{coulhonduong} for additional details and to \cite{acdh},  \cite{C2}, \cite{carron}, \cite{CMO}, \cite{D}, \cite{M} and the references therein for further results on the Riesz transform on $L^p$ for $p > 2$.


The next result is an improvement of  Theorem \ref{theorem1} under an additional assumption on $R_k^-$. 
We say that $R_k^-$ is $\epsilon$-sub-critical (or $\epsilon$-$H$-sub-critical or strongly sub-critical) for some $\epsilon \in [0,1)$ if 
\begin{equation}\label{eq4}
0\le (R_k^-\oo,\oo)\le\epsilon\,(H\oo,\oo) \  \forall \oo\in\mathcal{C}_0^{\infty}(\tm),
\end{equation}
where $H:=\nabla^*\nabla+R_k^+$ and $(., .)$ denotes the scalar product of  $L^2(\tm)$. For a scalar potential $V$, the definition of sub-criticality is the same, one  replaces  $H$ by $\Delta$, the scalar product is then taken  in $L^2(M)$. For further information on sub-criticality in the Euclidean space, see \cite{daviessimon}. 

\begin{thm}\label{theorem2}
\noindent Suppose that the manifold $M$ satisfies the volume doubling condition (\ref{D3}) with some $D>2$ and  the Gaussian upper bound  (\ref{G3}). Suppose in addition that $R_k^- \in \hat{K}$ and $R_k^-$ is  $\epsilon$-sub-critical for some $\epsilon \in [0, 1)$. Let $p_0:=\frac{2D}{(D-2)(1-\sqrt{1-\epsilon})}$ and fix $\tilde{p_0} < p_0$.   Denote by $p_0'$ the conjugate number of $p_0$. Then  
\begin{enumerate}[(i)]

\item We have
\begin{eqnarray*}
\|e^{-t\overrightarrow{\Delta}_k}\|_{p-p} &\le& C_p\text{ if }p\in (p_0',p_0), \ t \ge 0\\
\|e^{-t\overrightarrow{\Delta}_k}\|_{p-p} &\le& C_p(t\log t )^{\left|\frac{1}{\tilde{p_0}}-\frac{1}{p}\right|\frac{D}{2}}\text{ if } p\in [p_0,\infty], \ t > e\\
\|e^{-t\overrightarrow{\Delta}_k}\|_{p-p} &\le& C_p(t\log t )^{\left|\frac{1}{\tilde{p_0}} +\frac{1}{p}-1\right|\frac{D}{2}}\text{ if }p\in [1,p_0'], \ t > e.
\end{eqnarray*}

\item For all $t\ge 1$ 
\begin{equation*}
\|\nabla e^{-t\Delta}\|_{p-p}\le \frac{C_p}{\sqrt{t}}\text{ if }p\in (1,p_0)
\end{equation*}

\noindent and

\begin{equation*}
\|\nabla e^{-t\Delta}\|_{p-p}\le C_p \inf \left( t^{(\frac{1}{\tilde{p}_0}-\frac{1}{p})D-\frac{1}{2}}, t^{-\frac{\tilde{p_0}}{2p}} \right) \text{ if }p\ge p_0.
\end{equation*}

\item There exists $c,C>0$ such that for all $t>0$ and $x,y\in M$
\begin{equation*}
|\overrightarrow{p_k}(t,x,y)|\le \frac{C(1+ t)^{\frac{D}{\tilde{p}_0}}}{v(x,\sqrt{t})}\exp\left(-c\frac{\rho^2(x,y)}{t}\right).
\end{equation*}
\item For all $t\ge 1$ and $x,y\in M$
\begin{equation*}
|\overrightarrow{p_k}(t,x,y)|\le C\min\left(1,\frac{t^{\frac{D}{\tilde{p}_0}}}{v(x,\sqrt{t})}\right)\exp\left(-c\frac{\rho^2(x,y)}{t}\right).
\end{equation*}

\end{enumerate}
\end{thm}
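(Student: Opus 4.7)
The plan is to extend the strategy of Theorem \ref{theorem1} while using the sharper pivot space $L^{\tilde{p}_0}$ in place of $L^2$. The central new task is to promote the $L^2$-contractivity of $(e^{-t\ddd_k})_{t\ge 0}$ to uniform $L^p$ boundedness for every $p \in (p_0',p_0)$; once this sharpened baseline is available, all four statements will follow by rerunning the machinery of Theorem \ref{theorem1} with the new pivot.

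For the uniform boundedness step, write $\ddd_k = H - R_k^-$ with $H := \nabla^*\nabla + R_k^+$. Under (D) and (G) one has a Sobolev-type inequality
\begin{equation*}
\|\oo\|_{\frac{2D}{D-2}}^2 \le C\bigl((H\oo,\oo) + \|\oo\|_2^2\bigr), \qquad \oo \in \mathcal{C}_0^\infty(\tmk),
\end{equation*}
and the $\epsilon$-sub-criticality \eqref{eq4} gives $(\ddd_k\oo,\oo) \ge (1-\epsilon)(H\oo,\oo)$. Injecting these into a Davies--Simon type argument applied to $u(t)=e^{-t\ddd_k}\oo$, we differentiate $\|u(t)\|_{q(t)}^{q(t)}$ and choose the time-varying exponent $q(t)$ so that the resulting ODE couples the Sobolev exponent $\frac{2D}{D-2}$ with the constant $\epsilon$. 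The optimal stationary solution is precisely $q = p_0 = \frac{2D}{(D-2)(1-\sqrt{1-\epsilon})}$, which yields uniform $L^p$ boundedness for $p \in [2, p_0)$; self-adjointness of $\ddd_k$ extends this to $p \in (p_0', 2]$ by duality, proving the first line of (i).

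For the remaining part of (i), once $\|e^{-t\ddd_k}\|_{\tilde{p}_0\to\tilde{p}_0}$ is known to be uniformly bounded, we repeat the proof of Theorem \ref{theorem1}(i) with $L^{\tilde{p}_0}$ as the pivot instead of $L^2$. From (D) and (G) one deduces that $e^{-tH}$ satisfies an $L^{\tilde{p}_0}\to L^\infty$ ultracontractive estimate whose on-diagonal factor scales like $t^{D/(2\tilde{p}_0)}$, and the $\hat{K}$-perturbation argument of Theorem \ref{theorem1} controls $R_k^-$ at the cost of the factor $(\log t)^{D/(2\tilde{p}_0)}$; this produces the exponent $\left|\frac{1}{\tilde{p}_0}-\frac{1}{p}\right|\frac{D}{2}$ for $p \ge p_0$ and, by duality, for $p \in [1,p_0']$. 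Assertion (ii) is obtained from (i) via the commutation $\ddd\,d = d\,\Delta$, which gives $\nabla e^{-t\Delta} = e^{-(t/2)\ddd}\nabla e^{-(t/2)\Delta}$; one then optimises between the $L^2$-side gradient rate $t^{-1/2}$ and the polynomial rate coming from (i), which yields $t^{-1/2}$ when $p$ lies in the uniform range and the infimum of the two expressions when $p \ge p_0$. Finally, (iii) and (iv) follow by combining the uniform $L^{\tilde{p}_0}$ bound with the standard Davies perturbation $e^{\pm\l\rho(\cdot,y)}$ used in Theorem \ref{theorem1}: the factor $(1+t)^{D/\tilde{p}_0}$ records the $L^{\tilde{p}_0}\to L^\infty$ growth divided by $v(x,\sqrt{t})$, while the Gaussian tail is produced by the weighted $L^2$ off-diagonal estimate.

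The principal obstacle is undoubtedly the Davies--Simon step. One must calibrate the differential inequality for $q(t)$ so that the interplay of the Sobolev exponent $\frac{2D}{D-2}$ with the sub-critical constant $\epsilon$ produces exactly the sharp threshold $p_0$, and one has to extract a bound that is genuinely uniform in $t$ rather than merely finite for each fixed $t$. Once this Schr\"odinger-type $L^p$ estimate is secured, the rest of the theorem is a moderate adaptation of arguments already developed for Theorem \ref{theorem1}.
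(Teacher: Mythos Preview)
Your overall architecture is sound, and your reductions for (ii)--(iv) are close to what the paper does. The problematic step is the uniform $L^p$ boundedness for $p\in(p_0',p_0)$, where you invoke a global Sobolev inequality
\[
\|\oo\|_{\frac{2D}{D-2}}^2 \le C\bigl((H\oo,\oo)+\|\oo\|_2^2\bigr).
\]
Under (\ref{D3}) and (\ref{G3}) alone this inequality is \emph{not} available: one only has the weighted Gagliardo--Nirenberg inequality \eqref{eq33}, and the volume weight $v(\cdot,\sqrt{t})^{1/2-1/q}$ cannot in general be removed (this is precisely why the paper stresses that no non-collapsing assumption on $v(x,r)$ is made). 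The Davies--Simon ODE argument you sketch needs the unweighted Sobolev bound to produce the sharp threshold $p_0$, so as written this step does not go through on a general doubling manifold.

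The paper does not attempt a Davies--Simon argument here; it imports the uniform boundedness on $(p_0',p_0)$ from \cite{M}, where it is obtained by perturbation of $H$ via Duhamel and weighted $L^q$--$L^2$ off-diagonal estimates (these are recalled in Step~1 of Proposition~\ref{proposition41}). Those same off-diagonal bounds, carrying the weight $v(x,\sqrt t)^{1/q-1/2}$, are then reused to prove (iii): instead of running the Davies perturbation directly at the $L^{\tilde p_0}$ pivot as you suggest, the paper bootstraps the weighted $L^2$--$L^{\tilde p_0}$ estimate \eqref{equa38} to a weighted $L^2$--$L^\infty$ estimate via an extrapolation result of Boutayeb--Coulhon--Sikora, and then applies a theorem of Coulhon--Sikora to convert on-diagonal decay into the full Gaussian upper bound. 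Once (iii) is secured, your route to (i) (through Proposition~\ref{propouhabaz} with $p=\tilde p_0'$), to (ii), and to (iv) coincides with the paper's.
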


The bounds given in this result are  better than those in Theorem \ref{theorem1}. For example, taking  $p = 1$ in the first assertion we obtain the following  $L^1-L^1$ estimate (or $L^\infty-L^\infty$ by duality) 
$$  \|e^{-t\overrightarrow{\Delta}_k}\|_{1-1}  \le C_\delta (t\log t )^{ \frac{D-2}{4}(1-\sqrt{1-\epsilon}) +\delta }$$
for all $\delta > 0$ and all large $t$. 
In the case where $D \le 2$ it is proved in \cite{M} that  the semigroup $(e^{-t\ddd})_{t\ge0}$ is uniformly bounded on $L^p(\tm)$ for all $p \in (1, \infty)$. The proof given there works for $\ddd_k$ and gives the same result for  $(e^{-t\ddd_k})_{t\ge0}$  for  any $k \ge 2$. We note however that this uses  the assumption that $R_k^-$ is strongly sub-critical. We already mentioned   above that if $M = \R^2 \sharp \R^2$ then $(e^{-t\ddd})_{t\ge0}$ is not uniformly bounded on $L^p(\tm)$ for any $p \not=2$.

The last result we mention in this introduction is the following.
\begin{prop}\label{proposition1}
Suppose that the assumptions of Theorem \ref{theorem2} are satisfied. If $(e^{-t\ddd})_{t\ge0}$ is uniformly bounded on $L^p(\tm)$ for some fixed $p$. Then the Riesz transform $d\Delta^{-1/2}$ is bounded on $L^r(M)$ for all $r \in (1, \max(p,p'))$.
\end{prop}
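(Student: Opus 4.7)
My plan splits the claim at $r=2$ and reduces, via self-adjointness of $\ddd$ on $L^2(\tm)$, to the case $p\geq 2$. Indeed, the Banach-space adjoint of $e^{-t\ddd}$ acting on $L^p$ equals $e^{-t\ddd}$ on $L^{p'}$ (because $\ddd=\ddd^*$ on $L^2$), so the uniform boundedness on $L^p$ and on $L^{p'}$ are equivalent; we may therefore assume $p\geq 2$, and then $\max(p,p')=p$. Thus it suffices to prove that $d\Delta^{-1/2}$ is bounded on $L^r$ for every $r\in(1,p)$. For $r\in(1,2]$ this follows from the classical Coulhon--Duong theorem applied to $\Delta$, which uses only (D) and (G); the hypothesis on $\ddd$ plays no role here. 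The content of the proposition is therefore the range $r\in(2,p)$.

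For this range I would pass to the adjoint. The commutation $\ddd d=d\Delta$ yields $\Delta d^*=d^*\ddd$ by taking $L^2$-adjoints, hence $\Delta^{-1/2}d^*=d^*\ddd^{-1/2}$; and the boundedness of $d\Delta^{-1/2}$ on $L^r$ is equivalent to the boundedness of $d^*\ddd^{-1/2}$ from $L^{r'}(\tm)$ to $L^{r'}(M)$ for $r'\in(p',2)$. Its $L^2$-boundedness is immediate from Hodge theory, since $\|d^*\omega\|_2\leq\|\sqrt{\ddd}\,\omega\|_2$. By Riesz--Thorin between the hypothesis $\|e^{-t\ddd}\|_{p-p}\leq C$ and the $L^2$-contractivity, $(e^{-t\ddd})$ is uniformly bounded on $L^q$ for every $q\in[p',p]$. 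Combined with the Gaussian-type kernel estimate of Theorem \ref{theorem2}(iv), this should yield $L^{r'}$--$L^{r'}$ Davies--Gaffney off-diagonal estimates on $e^{-t\ddd}$, uniformly in $t>0$, for each $r'\in(p',2)$. One then mimics the Coulhon--Duong argument for $\ddd$ acting on $1$-forms (as in the sketch of Corollary \ref{cor31}), but at the $L^{p'}$-level instead of the $L^1$-level, in the spirit of Blunck--Kunstmann: a Calderón--Zygmund decomposition in $L^{p'}$, combined with the off-diagonal estimates, gives weak-type $(p',p')$ boundedness of $d^*\ddd^{-1/2}$, and Marcinkiewicz interpolation with the $L^2$-bound upgrades this to strong $L^{r'}$-boundedness for every $r'\in(p',2)$. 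Dualizing and invoking $\ddd^{-1/2}d=d\Delta^{-1/2}$ on functions recovers the desired boundedness of the Riesz transform on $L^r$ for $r\in(2,p)$.

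The main obstacle is the promotion of the hypothesis to uniform $L^{p'}$--$L^{p'}$ Davies--Gaffney estimates on $e^{-t\ddd}$. The kernel bound in Theorem \ref{theorem2}(iv) contains the polynomial factor $t^{D/\tilde{p}_0}$, which prevents a direct extraction of off-diagonal decay in the $L^{p'}$-norm; the uniform $L^p$-boundedness of the semigroup is precisely the input that absorbs this factor when the kernel is tested against $L^{p'}$-inputs. Executing this absorption rigorously, and then adapting the Calderón--Zygmund / Coulhon--Duong argument from the classical $L^1$-level to the $L^{p'}$-level, is where the real technical work of the proof lies.
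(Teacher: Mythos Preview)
Your overall architecture matches the paper's: reduce to $p\ge 2$ by duality, dispose of $r\in(1,2]$ via Coulhon--Duong, and for $r\in(2,p)$ dualize to the Riesz transform $d^*\ddd^{-1/2}$ on $L^{r'}(\tm)$ with $r'\in(p',2)$, the key input being off-diagonal estimates for $(e^{-t\ddd})$ in this range. The difference lies in how those off-diagonal estimates are produced and how they are used.

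In the paper, the off-diagonal estimates are \emph{not} extracted from the pointwise kernel bound of Theorem~\ref{theorem2}(iv). Instead one uses the $L^2$--$L^2$ Davies--Gaffney estimates (which hold automatically for $\ddd$) and composes them with the uniform $L^q$--boundedness of the semigroup to obtain $L^{r'}$--$L^2$ off-diagonal bounds of the form
\[
\|\chi_{B(x,\sqrt t)}\,e^{-t\ddd}\,\chi_{B(y,\sqrt t)}\|_{r'-2}\le C\,v(x,\sqrt t)^{\frac12-\frac{1}{r'}}e^{-c\rho^2(x,y)/t},
\]
as in \cite{M}. The only reason \cite{M} stops at $r'>p_0'$ is the range of uniform $L^q$--boundedness; your hypothesis extends that range to $r'>p'$, and the polynomial factor you worry about never appears. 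The endgame in the paper is then via Hardy spaces rather than a Blunck--Kunstmann Calder\'on--Zygmund argument: one invokes the result of \cite{CMO} that $d^*\ddd^{-1/2}\colon H^{r'}_{\ddd}(\tm)\to L^{r'}(M)$ is bounded for all $r'\in(1,2]$, and the off-diagonal estimates above force $H^{r'}_{\ddd}(\tm)=L^{r'}(\tm)$ with equivalent norms for $r'\in(p',2]$.

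Your proposed route---deriving $L^{r'}$--$L^{r'}$ off-diagonal decay from Theorem~\ref{theorem2}(iv) together with uniform $L^{p'}$-boundedness, then running a Blunck--Kunstmann argument---is in principle workable and morally equivalent, but the ``absorption of the polynomial factor'' you flag is genuinely awkward to carry out from the kernel bound alone. The paper bypasses this entirely by taking the $L^2$ Davies--Gaffney estimates as the source of the Gaussian decay and using the semigroup's uniform $L^q$-boundedness only as a multiplicative input, which is both shorter and avoids the obstacle you identified.
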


The paper is organized as follows. In Section \ref{section52}, we prove some useful preliminary results on the enlarged Kato class $\hat{K}$. Theorem \ref{theorem1} will be proved in Section \ref{section53}. Finally, Section \ref{section54} is devoted to the proofs of Theorem \ref{theorem2} and Proposition \ref{proposition1}. \\

Throughout this paper we use $C$ and $c$ for all inessential constants, their value may change from line to line. For a bounded linear operator $T: L^p \to L^q$  we use $\| T \|_{p-q}$ to denote the corresponding norm. For simplicity, we use the same notation $(., .)$ for the  scalar  product in $L^2(M)$ as well as in $L^2(\tmk)$ (the difference will be clear from the context). We also use the same notation for the duality $L^p-L^{p'}$.

\section{Preliminaries}\label{section52}

\noindent We first recall the following very well-known lemma.

\begin{lem}\label{integral}
Let $x\in M$. There exists a constant $C>0$ independent of $x$ such that for all $t>0$
\begin{equation*}
\int_M \exp\left(-c\frac{\rho^2(x,y)}{t}\right)d\mu(y)\le Cv(x,\sqrt{t}).
\end{equation*}
\end{lem}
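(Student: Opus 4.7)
The plan is to exploit the volume doubling property \eqref{D3} via a standard annular decomposition of $M$ centered at $x$. Specifically, I would write
\[
M = B(x,\sqrt{t}) \cup \bigcup_{k\ge 0} A_k(x,t), \qquad A_k(x,t) := B(x,2^{k+1}\sqrt{t}) \setminus B(x,2^k\sqrt{t}),
\]
and split the integral accordingly. On the ball $B(x,\sqrt{t})$ the exponential factor is bounded by $1$, which contributes at most $v(x,\sqrt{t})$. On each annulus $A_k(x,t)$ one has $\rho(x,y)\ge 2^k\sqrt{t}$, so the exponential is pointwise bounded by $e^{-c\,4^k}$.

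For the measure of $A_k(x,t)$, I would simply use the monotonicity $\mu(A_k(x,t))\le v(x,2^{k+1}\sqrt{t})$ and then invoke the doubling inequality \eqref{D3} with $\lambda=2^{k+1}$ and radius $r=\sqrt{t}$ to obtain
\[
v(x,2^{k+1}\sqrt{t}) \le C\, 2^{(k+1)D}\, v(x,\sqrt{t}).
\]
Putting these estimates together yields
\[
\int_M \exp\!\left(-c\frac{\rho^2(x,y)}{t}\right) d\mu(y) \le v(x,\sqrt{t})\left(1 + C\, 2^D \sum_{k\ge 0} 2^{kD} e^{-c\, 4^k}\right).
\]

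The only thing that needs any care is the convergence of the numerical series $\sum_{k\ge 0} 2^{kD} e^{-c\,4^k}$, which is obvious since the Gaussian factor $e^{-c\,4^k}$ decays super-exponentially while $2^{kD}$ grows only exponentially in $k$; hence the bracket is a finite constant $C'$ depending only on $c$ and $D$, and the claimed bound $Cv(x,\sqrt{t})$ follows with $C=1+C' \cdot C\,2^D$. There is no real obstacle: the result is a direct consequence of doubling, and the argument is independent of $x$ since the constants in \eqref{D3} are uniform in $x$.
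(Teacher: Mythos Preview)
Your proof is correct and follows essentially the same strategy as the paper: an annular decomposition centered at $x$ combined with the doubling property \eqref{D3} to control the volume of each annulus, yielding a convergent numerical series. The only cosmetic difference is that the paper uses linear annuli $\{j\sqrt{t}\le\rho(x,y)\le(j+1)\sqrt{t}\}$ with the bound $e^{-cj^2}v(x,(j+1)\sqrt{t})$, whereas you use dyadic annuli; both variants lead to the same conclusion with constants depending only on $c$ and $D$.
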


\begin{proof}
\noindent We have
\begin{align*}
\int_M \exp\left(-c\frac{\rho^2(x,y)}{t}\right)d\mu(y)
&\le \sum_{j=0}^\infty\int_{j\sqrt{t}\le\rho(x,y)\le (j+1)\sqrt{t}}\exp(-cj^2)d\mu(y)\\
&\le \sum_{j=0}^\infty \exp(-cj^2)v(x,(j+1)\sqrt{t}).
\end{align*}
We use the  doubling property (\ref{D3}) and obtain the lemma.  
\end{proof}

Let $R_k$ be  an $L^\infty_{loc}$ section of the vector bundle ${\rm End}(\tmk)$. For each $x \in M$, the symmetric endomorphism 
$R_k(x)$ can be decomposed into a positive and negative parts $R_k(x)^+$ and $R_k(x)^-$ (i.e. $R_k(x) = R_k(x)^+ - R_k(x)^-$). A way to do this is to define $R_k(x)^+ w = R_k(x)w$ if $w$ is an eigenvector corresponding to a positive eigenvalue and $0$ if not. The negative part is
$R_k(x)^- = (-R_k(x))^+$. 

\begin{lem}\label{lemma2}
Suppose that $R_k^- \in \hat{K}$ and set $W(x) := | R_k^-(x)|_x$.  Then there exists  constants $\kappa \in [0,1)$ and $C > 0$ such that
$$\int_M W |f|^2 d\mu \le \kappa \| df \|_2^2 + C \|f \|_2^2$$
for all $f \in W^{1,2}(M)$.
\end{lem}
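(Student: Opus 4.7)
The plan is to recast the claimed form bound as an $L^2$-operator bound on a resolvent, and then to obtain that bound in two steps: (a) a pointwise estimate $(\Delta+\lambda)^{-1}W \le \beta(\lambda) < 1$ coming from the $\hat K$-assumption via a geometric-series argument, and (b) a Schur-test step that upgrades this pointwise bound to an $L^2$-operator bound on $W^{1/2}(\Delta+\lambda)^{-1}W^{1/2}$. Since $(\Delta f, f) = \|df\|_2^2$ on the form domain $W^{1,2}(M)$, the desired inequality, with $C = \kappa\lambda$, is equivalent to $\|W^{1/2}(\Delta+\lambda)^{-1/2}\|_{L^2\to L^2}^2 \le \kappa$; by $\|A\|^2 = \|AA^*\|$ this in turn reduces to $\|W^{1/2}(\Delta+\lambda)^{-1}W^{1/2}\|_{L^2\to L^2} \le \kappa$.

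For step (a), pick $\alpha > 0$ and $\kappa_0 \in [0,1)$ with $\sup_x \int_0^\alpha (e^{-s\Delta}W)(x)\, ds \le \kappa_0$, as provided by $W = |R_k^-| \in \hat K$. Decompose $(\Delta+\lambda)^{-1}W(x) = \sum_{k\ge 0}\int_{k\alpha}^{(k+1)\alpha} e^{-\lambda s}(e^{-s\Delta}W)(x)\, ds$, and on the $k$-th piece apply the semigroup identity $e^{-(k\alpha+u)\Delta}W = e^{-k\alpha\Delta}(e^{-u\Delta}W)$ together with the submarkovianity $\int p_{k\alpha}(x,z)\, d\mu(z) \le 1$: the factor inside the bracket has $L^\infty$-norm bounded by $\int_0^\alpha (e^{-u\Delta}W)(z)\,du \le \kappa_0$, so the $k$-th piece is $\le e^{-\lambda k\alpha}\kappa_0$. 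Summing the geometric series yields $\sup_x (\Delta+\lambda)^{-1}W(x) \le \beta(\lambda) := \frac{\kappa_0}{1-e^{-\lambda\alpha}}$, which is strictly less than $1$ as soon as $\lambda > -\alpha^{-1}\log(1-\kappa_0)$; fix such a $\lambda$.

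For step (b), apply Schur's test to the non-negative symmetric kernel $K(x,y) := W^{1/2}(x)G_\lambda(x,y)W^{1/2}(y)$ of $W^{1/2}(\Delta+\lambda)^{-1}W^{1/2}$, with weight $\psi := W^{1/2}$: the bound from step (a) gives $\int K(x,y)\psi(y)\, d\mu(y) = W^{1/2}(x)\int G_\lambda(x,y)W(y)\, d\mu(y) \le \beta(\lambda)\,\psi(x)$, and the symmetric estimate follows from symmetry of $G_\lambda$, so $\|W^{1/2}(\Delta+\lambda)^{-1}W^{1/2}\|_{L^2\to L^2} \le \beta(\lambda)$. Unwinding: for $f \in C_c^\infty(M)$, which lies in $W^{1,2}(M)$ and satisfies $W^{1/2}f \in L^2$ since $W \in L^\infty_{\rm loc}$, setting $h := (\Delta+\lambda)^{1/2}f \in L^2$ we get $\int W|f|^2\, d\mu = \|W^{1/2}(\Delta+\lambda)^{-1/2}h\|_2^2 \le \beta(\lambda)\,\|h\|_2^2 = \beta(\lambda)\bigl(\|df\|_2^2 + \lambda\|f\|_2^2\bigr)$, and the inequality extends to all $f \in W^{1,2}(M)$ by density plus Fatou, yielding $\kappa := \beta(\lambda) < 1$ and $C := \beta(\lambda)\lambda$. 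The substantive obstacle is step (a): the enlarged-Kato condition is strictly weaker than any smallness condition, and it is crucial that the semigroup identity together with submarkovianity turns the single sub-unit estimate on $[0,\alpha]$ into a sub-unit bound for the full resolvent; step (b) is then routine, modulo the mild issue that $\psi = W^{1/2}$ may vanish, which one handles by restricting to $\{W>0\}$ (the support of $K$) or by regularising $W \mapsto W+\varepsilon$ and passing to the limit.
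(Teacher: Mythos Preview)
Your proof is correct and shares the same skeleton as the paper's: both reduce the form bound to the operator inequality $\|W^{1/2}(\Delta+\lambda)^{-1}W^{1/2}\|_{2\to 2}<1$, and both obtain the preliminary pointwise bound $\sup_x(\Delta+\lambda)^{-1}W(x)<1$ for large $\lambda$ by exactly the same geometric-series argument from the $\hat K$ hypothesis and submarkovianity.

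The difference lies in how the pointwise bound is converted into the $L^2$ bound. The paper first notes that $\|(\Delta+\lambda)^{-1}W\|_{\infty\to\infty}<1$ implies by duality $\|W(\Delta+\lambda)^{-1}\|_{1\to1}<1$, and then applies Stein's interpolation theorem to the analytic family $F(z)=W^{z}(\Delta+\lambda)^{-1}W^{1-z}$ between the lines $\Re z=0$ and $\Re z=1$ to conclude at $z=\tfrac12$. You instead apply Schur's test with weight $\psi=W^{1/2}$ to the symmetric nonnegative kernel $W^{1/2}(x)G_\lambda(x,y)W^{1/2}(y)$; the row-sum condition $\int K(x,y)\psi(y)\,d\mu(y)\le\beta(\lambda)\psi(x)$ is precisely the pointwise bound from step~(a), and symmetry handles the column sum. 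Your route is more elementary---it avoids analytic interpolation entirely and stays at the level of kernels---while the paper's route is the one usually attributed to Voigt and generalises cleanly when $W$ is replaced by a less commutative object. Your handling of the vanishing set of $W$ (restrict to $\{W>0\}$, where the kernel is supported and the weight is positive) is adequate.
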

\begin{proof}
The arguments are the same as in \cite{voigt}, Proposition 4.7. We prove first  that there exists a sufficiently large $\lambda > 0$ such that 
$$\| (\Delta+\lambda)^{-1}W\|_{\infty-\infty}<1.$$
Indeed, 
\begin{align*}
\|(\Delta+\lambda)^{-1} W \|_{\infty-\infty}
&=\|\int_0^{\infty}e^{-\lambda s}e^{-s\Delta} W ds\|_{\infty-\infty}\\
&\le \sum_{n=0}^{\infty}\|\int_{n\alpha}^{(n+1)\alpha}e^{-\lambda s}e^{-s\Delta} W ds\|_{\infty-\infty}\\
&\le \sum_{n=0}^{\infty}\|\int_0^{\alpha}e^{-\lambda(t+n\alpha)}e^{-(t+n\alpha)\Delta} W dt\|_{\infty-\infty} \\
&\le \sum_{n=0}^{\infty}e^{-\lambda n\alpha}\|e^{-n\alpha\Delta}\|_{\infty-\infty}\|\int_0^{\alpha}e^{-\lambda t}e^{-t\Delta}W dt\|_{\infty-\infty}\\
&\le \frac{1}{1-e^{-\lambda \alpha}}\|\int_0^{\alpha}e^{-t\Delta} W dt\|_{\infty-\infty}.
\end{align*}
\noindent Since we assume $ W\in \hat{K}$, there exists $\alpha > 0$ such that  $\|\int_0^{\alpha}e^{-t\Delta} W dt\|_{\infty-\infty}<1$. Taking  $\lambda$ sufficiently large we obtain  $\|(\Delta+\lambda)^{-1} W \|_{\infty-\infty}<1$.

Next, it follows by duality that $\| W (\Delta+\lambda)^{-1}\|_{1-1}<1$. Now we apply  Stein's interpolation theorem to $F(z) :=W^z(\Delta+\lambda)^{-1}W^{1-z}$ and  obtain
$$\|W^{\frac{1}{2}}(\Delta+\lambda)^{-\frac{1}{2}}\|_{2-2}^2=\|W^{\frac{1}{2}}(\Delta+\lambda)^{-1}W^{\frac{1}{2}}\|_{2-2}<1.$$
This gives the desired assertion. 
\end{proof}

The operator $\ddd_k$ is defined via the quadratic form
\begin{equation}\label{eq21}
\overrightarrow{\mathfrak{a}}(\omega, \omega) = (d_{k-1}^* \oo, d_{k-1}^*\oo) + (d_k \oo, d_k \oo). 
\end{equation}
On the other hand, it is known by the Bochner formula that
\begin{equation}\label{bochner-k}
\ddd_k = \nabla^* \nabla + R_k
\end{equation}
where $R_k$ can be expressed in terms of the Riemann curvature (see e.g., \cite{Berard}, Section E). Again, if $k = 1$ then $R_1$ is identified with the Ricci curvature $R$ of $M$. 

The previous lemma allows us to look at $\ddd_k$ as  the perturbation of $H := \nabla^* \nabla + R_K^+$ by $- R_k^-$ in the quadratic form sense. That is, $\ddd_k$ is the operator associated with the quadratic form
\begin{equation}\label{eq211}
\overrightarrow{\mathfrak{a}}(\omega, \omega) = (H\omega, \omega) - (R_k^- \omega, \omega).
\end{equation}
Indeed, by the domination property (see \cite{HSU}, \cite{Berard} or \cite{Ou98}) we have
\begin{equation}\label{eq22}
\int_M | d | \oo | |^2  \le \int_M |\nabla \oo |^2.
\end{equation}
Thus by Lemma \ref{lemma2},
\begin{align*}
|(R_k^- \oo, \oo)| &\le  \kappa \| d |\oo|  \|_2^2 + C \| \oo \|_2^2\\
&\le \kappa \int_M |\nabla \oo |^2 + C \| \oo \|_2^2\\
&\le \kappa (H \oo, \oo) + C \| \oo \|_2^2.
\end{align*}
We can now apply  the well known KLMN theorem (see e.g. \cite{ouhabaz}, Chapter 1).

Following \cite{CDS},  a non-negative  function $W \in K^\infty$ (it is the called {\it small at infinity}) if there exists $A>0$ such that
\begin{equation}
\underset{x\in M}{\sup}\int_{M\setminus B(o,A)}g(x,y) W(y) d\mu(y)<1, 
\end{equation}
where $o\in M$ is a fixed point and $g(x,y)$ is the Green function of the Laplace-Beltrami operator (we assume that $g(x,y)$ exists). 
As mentioned in the introduction, one of the main assumptions  in \cite{CDS} in order to prove a Gaussian upper bound for the heat kernel 
of $\ddd_k$ is 
$R_k^- \in K^\infty$ (i.e., $x \mapsto | R_k^-(x)|_x \in K^\infty$). We point out that this class is smaller than the enlarged Kato class $\hat{K}$.

\begin{prop}\label{proposition2}
We have $L^\infty_{loc} (M) \cap K^{\infty}\subset\hat{K}$. In particular, if $R_k^- \in K^\infty$ then $R_k^- \in \hat{K}$. 
\end{prop}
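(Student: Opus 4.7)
The plan is to split the defining integral for $\hat{K}$ into a near piece around the fixed base point $o$ (where the $L^\infty_{loc}$ hypothesis gives pointwise control of $W$) and a far piece (where the $K^\infty$ hypothesis does the work via the Green function). Concretely, set $W(x):=|R_k^-(x)|_x$ (or just a generic non-negative $W\in L^\infty_{loc}(M)\cap K^\infty$), choose $A>0$ from the definition of $K^\infty$, and write
$$\int_M\int_0^\alpha p_s(x,y)W(y)\,ds\,d\mu(y)=\int_{B(o,A)}\int_0^\alpha p_s(x,y)W(y)\,ds\,d\mu(y)+\int_{M\setminus B(o,A)}\int_0^\alpha p_s(x,y)W(y)\,ds\,d\mu(y).$$

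For the far integral, the key identity is $g(x,y)=\int_0^\infty p_s(x,y)\,ds$. Since the integrand is non-negative, truncating the $s$-integration at $\alpha$ only decreases the value, so the far piece is bounded by $\int_{M\setminus B(o,A)}g(x,y)W(y)\,d\mu(y)$, which by the $K^\infty$ hypothesis is at most some $\delta<1$ uniformly in $x$. For the near integral, the closed ball $\overline{B}(o,A)$ is compact (by completeness of $M$ together with Hopf-Rinow), so $M_A:=\|W\|_{L^\infty(B(o,A))}$ is finite; combining this with the submarkovian bound $\int_M p_s(x,y)\,d\mu(y)\le 1$ coming from \eqref{eq0} gives the uniform bound $M_A\alpha$.

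Putting the two estimates together yields $\sup_x\int_M\int_0^\alpha p_s(x,y)W(y)\,ds\,d\mu(y)\le\delta+M_A\alpha$, and choosing $\alpha<(1-\delta)/M_A$ forces the left-hand side to be strictly less than $1$, which is exactly what membership in $\hat{K}$ requires. I do not expect a genuine obstacle here; the only conceptual point is the passage between the Green function formulation of $K^\infty$ and the time-integral formulation of $\hat{K}$, which is made via $g=\int_0^\infty p_s\,ds$ together with positivity. Once that bridge is in place, the argument is a standard Kato-type splitting, and the corresponding statement for $R_k^-$ follows immediately by applying it to the scalar function $x\mapsto|R_k^-(x)|_x$.
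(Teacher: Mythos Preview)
Your proof is correct and follows essentially the same route as the paper: split the integral into a near part on $B(o,A)$ (controlled by $\|W\|_{L^\infty(B(o,A))}\cdot\alpha$ via the submarkovian bound) and a far part (dominated by the Green integral from the $K^\infty$ hypothesis), then take $\alpha$ small. The only differences are cosmetic---you make explicit the use of Hopf--Rinow and the choice $\alpha<(1-\delta)/M_A$, which the paper leaves implicit.
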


\begin{proof}
Let $W \in L^\infty_{loc}(M) \cap K^\infty$.  For $A>0$
\begin{align*}
\int_M \int_0^{t}p_s(x,y) W(y) ds d\mu(y)
&\le \int_{B(o,A)} \int_0^{t}p_s(x,y) W(y) ds  d\mu(y)\\
& +\int_{M\setminus B(o,A)} \int_0^{\infty}p_s(x,y) W(y) ds d\mu(y)\\
&\le  \| W \|_{L^\infty(B(o,A))} \int_0^t \int_M p_s(x,y) d\mu(y)  ds \\
& +\int_{M\setminus B(o,A)} g(x,y) W(y) d\mu(y)\\
& \le t \| W \|_{L^\infty(B(o,A))} +\int_{M\setminus B(o,A)} g(x,y) W(y) d\mu(y).
\end{align*}
Taking $t$ small enough and using the definition of $W \in K^\infty$ we see that the last term is less than $1$. This means that $W \in \hat{K}$.
\end{proof}


\section{Proof of Theorem \ref{theorem1}}\label{section53}

\noindent In this section we prove the four statements of Theorem \ref{theorem1}. We divide the proof into several  steps.\\
In order to avoid repetition, we assume throughout this section that $M$ satisfies the doubling property \eqref{D3} and the Gaussian upper bound \eqref{G3}. We also assume that $R_k^- \in \hat{K}$.
We denote by $V(x)$  the lowest eigenvalue of the Riemann curvature (the Ricci curvature if $k=1$)  $R_k(x)$ and $V_-$ is the negative part of the function $V(x)$. 

\begin{lem}\label{lemma31} There exist positive constants $M$ and $\nu$ such that  
$$ \| e^{-t(\Delta -V_-)} \|_{1-1} \le M e^{\nu t}$$
\end{lem}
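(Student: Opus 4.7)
Since $V(x)$ is the lowest eigenvalue of the symmetric endomorphism $R_k(x)$, its negative part satisfies $V_-(x) = |R_k^-(x)|_x$; hence $V_- \in \hat{K}$ by assumption. Fix $\alpha > 0$ and $\beta \in (0,1)$ with
\[
\sup_{x \in M} \int_0^{\alpha}\!\!\int_M p_s(x,y)\, V_-(y)\, d\mu(y)\, ds \;\le\; \beta.
\]
Since $V_- \in \hat{K}$, the operator $\Delta - V_-$ is well-defined and self-adjoint on $L^2(M)$ by the KLMN theorem applied to Lemma~\ref{lemma2} (exactly as for $\ddd_k$ right after \eqref{eq211}). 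The strategy is to control the semigroup on $L^\infty$ and then transfer to $L^1$ via the self-adjoint duality $\|T\|_{1-1} = \|T^*\|_{\infty-\infty}$.

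For the $L^\infty$ estimate I would expand via the Dyson--Phillips series
\[
e^{-t(\Delta - V_-)} = \sum_{n \ge 0} T_n(t), \qquad T_n(t) = \int_{0 \le s_1 \le \cdots \le s_n \le t} e^{-(t-s_n)\Delta} V_- \cdots V_- e^{-s_1 \Delta}\, ds,
\]
and prove by induction on $n$ the Khasminskii-type estimate $\|T_n(t)\|_{\infty-\infty} \le \beta^n$ for $0 \le t \le \alpha$. The inductive step applies $T_n(t)$ to a positive $f \in L^\infty$, uses $e^{-s_1\Delta}f \le \|f\|_\infty$ to peel off the innermost semigroup, and then integrates first in the innermost time variable using the defining inequality $\sup_y \int_0^\alpha (e^{-r\Delta}V_-)(y)\,dr \le \beta$; this produces a factor of $\beta$ times the bound for $T_{n-1}$. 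Summing yields $\|e^{-\alpha(\Delta - V_-)}\|_{\infty-\infty} \le 1/(1-\beta)$, and the semigroup property then gives
\[
\|e^{-t(\Delta - V_-)}\|_{\infty-\infty} \le (1-\beta)^{-\lceil t/\alpha \rceil} \le M e^{\nu t},
\]
with $M = 1/(1-\beta)$ and $\nu = \alpha^{-1}\log\bigl(1/(1-\beta)\bigr)$. Duality closes the argument and produces the claimed constants, both positive.

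The main technical obstacle is that $V_-$ is unbounded in general, so the absolute convergence of the Dyson--Phillips series and the manipulations inside it require an approximation step: replace $V_-$ by its truncation $V_- \wedge N$, for which the corresponding perturbation is bounded and everything is classical; apply the above scheme to obtain the estimate $M e^{\nu t}$ with $M,\nu$ independent of $N$ (since the Kato-class constants $\alpha,\beta$ are not worsened by truncation); and finally pass to the limit as $N \to \infty$ by monotone convergence of the semigroups and of the series. Apart from this bookkeeping, the argument is a direct consequence of the $\hat{K}$ hypothesis on $V_-$.
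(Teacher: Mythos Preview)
Your proof is correct and follows essentially the same strategy as the paper's: truncate $V_-$, use the $\hat K$-condition to obtain uniform (in the truncation) exponential bounds on the perturbed semigroup via the Dyson--Phillips/Khasminskii iteration, and pass to the limit by monotone convergence. The only cosmetic difference is that the paper invokes the Miyadera perturbation theorem (Voigt \cite{voigt}, Theorem~1.1) directly on $L^1$ as a black box, whereas you spell out the underlying Khasminskii argument on $L^\infty$ and then dualize via self-adjointness; these are dual formulations of the same mechanism.
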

\begin{proof} The arguments of the proof are mainly borrowed from \cite{voigt}).\\
First, since $R_k^- \in \hat{K}$ then by Lemma \ref{lemma2} $V_-$ satisfies 
$$\int_M V_- |f|^2 d\mu \le \kappa \| df \|_2^2 + C \|f \|_2^2$$
for all $f \in W^{1,2}(M)$ and for some constant $\kappa < 1$. As explained in the previous section, by the KLMN theorem, the operator $\Delta -V_-$ is self-adjoint (with an appropriate domain) and hence the semigroup $ e^{-t(\Delta -V_-)} $ acts on $L^2(M)$. 
On the other hand, since $V_- \in \hat{K}$ there exist $\alpha > 0$ and $\kappa' < 1$ such that for $V_n := \inf(V_-, n) $ 
$$ \int_0^\alpha \| V_n e^{-s\Delta} f \|_1 ds \le \kappa' \| f \|_1.$$
The constants $\alpha$ and $\kappa'$ are independent of $n$. Therefore, by Miyadera perturbation theorem (cf. \cite{voigt}, Theorem 1.1) $e^{-t(\Delta - V_n)}$ is a strongly continuous semigroup on $L^1(M)$ and 
\begin{equation}\label{eq31}
\| e^{-t(\Delta - V_n)} f \|_1 \le M e^{\nu t} \| f \|_1
\end{equation}
with constants $M$ and $\nu$ independent of $n$. On the other hand, by classical monotone convergence theorems (for quadratic forms),  $e^{-t(\Delta - V_n)}$ converges strongly in $L^2(M)$ to $e^{-t(\Delta - V_-)}$ as $n \to \infty$.  An application of Fatou's lemma in \eqref{eq31} with $f \in L^1 \cap L^2$ gives \eqref{eq31} for 
$e^{-t(\Delta - V_-)}$ and $f \in \in L^1 \cap L^2$. Finally, we  argue by density to extend the estimate to all $f \in L^1$.   \end{proof}
\begin{lem}\label{lemma32}
The heat kernel $\overrightarrow{p_k}(t,x,y)$ of $\ddd_k$ has the following upper bound
\begin{equation*}
|\overrightarrow{p_k}(t,x,y)|\le \frac{C}{v(x,\sqrt{t})}\exp\left(-c\frac{\rho^2(x,y)}{t}\right) e^{\nu t},
\end{equation*}
where $C, c$ and $\nu$ are positive constants.
\end{lem}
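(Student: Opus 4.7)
The plan is to reduce the form-valued kernel estimate to a scalar one via Hess--Schrader--Uhlenbrock pointwise domination, and then to derive the Gaussian bound for the associated scalar Schr\"odinger semigroup by combining Lemma~\ref{lemma31} with the Gaussian bound (\ref{G3}) for $p_t$. Write $V(x)$ for the lowest eigenvalue of $R_k(x)$ and $V_-$ for its negative part, so that $-V_-(x)\le V(x)$ pointwise.

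First, the Hess--Schrader--Uhlenbrock inequality for the Hodge--de~Rham Laplacian (see \cite{HSU}, \cite{Berard}, \cite{Ou98}) combined with the standard comparison of scalar Schr\"odinger semigroups with ordered potentials gives the pointwise domination
$$\bigl|(e^{-t\ddd_k}\omega)(x)\bigr|_x \le \bigl(e^{-t(\Delta - V_-)}|\omega|\bigr)(x)\quad\text{for a.e.\ }x\in M,$$
for every $\omega\in L^2(\tmk)$. Testing against approximate delta sections yields
$$|\overrightarrow{p_k}(t,x,y)|\le q_t(x,y),$$
where $q_t$ is the nonnegative, symmetric integral kernel of $e^{-t(\Delta - V_-)}$. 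So the problem reduces to proving the scalar bound $q_t(x,y)\le \frac{C}{v(x,\sqrt t)}e^{-c\rho^2(x,y)/t}e^{\nu t}$.

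For the scalar bound, Lemma~\ref{lemma31} already gives $\|e^{-t(\Delta - V_-)}\|_{1-1}\le M e^{\nu t}$; self-adjointness of $\Delta - V_-$ and duality produce the same bound on $L^\infty$. To upgrade to a pointwise Gaussian kernel bound I would expand via Duhamel,
$$q_t = p_t + \int_0^t e^{-(t-s)\Delta}\,V_-\,q_s\,ds,$$
and iterate the resulting (nonnegative) Dyson series on successive intervals of length $\alpha$ coming from the definition of $\hat K$. On each such interval the series converges geometrically (this is precisely what $V_-\in\hat K$ provides), and the Gaussian structure of $p_t$ is preserved under the semigroup convolutions appearing in each term; gluing the estimate over $\lceil t/\alpha\rceil$ successive intervals by the semigroup property accumulates the allowed factor $e^{\nu t}$, and the doubling property (\ref{D3}) absorbs the change of volume scale between $v(x,\sqrt t)$ and $v(x,\sqrt{t/a})$ arising from the Gaussian propagation.

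The main obstacle is this last step: preserving both the on-diagonal behaviour $1/v(x,\sqrt t)$ and the off-diagonal Gaussian decay $e^{-c\rho^2/t}$ under the perturbation, while producing only the exponential factor $e^{\nu t}$ in time. If the explicit Dyson bookkeeping proves cumbersome, an equivalent route is to use Davies' exponential perturbation trick (available because $V_-$ is a multiplication operator and commutes with conjugation by Lipschitz weights) to obtain Davies--Gaffney off-diagonal $L^2$ estimates for $e^{-t(\Delta-V_-)}$ with an extra $e^{\nu t}$ factor, combine with an ultracontractive on-diagonal bound derived from the $L^1$--$L^1$ estimate of Lemma~\ref{lemma31} together with (\ref{G3}), and then invoke the classical Grigor'yan/Coulhon--Duong argument converting an on-diagonal estimate and a Davies--Gaffney estimate into a full pointwise Gaussian upper bound.
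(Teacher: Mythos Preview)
Your reduction step via Hess--Schrader--Uhlenbrock domination, bounding $|\overrightarrow{p_k}(t,x,y)|$ by the kernel $q_t(x,y)$ of $e^{-t(\Delta-V_-)}$, is exactly what the paper does. The difference is in how the scalar Gaussian bound for $q_t$ is obtained. The paper does not iterate a Dyson series; instead it observes that the Gaussian bound (\ref{G3}) for $p_t$ yields a Gagliardo--Nirenberg inequality $\|v(\cdot,\sqrt t)^{\frac12-\frac1q}u\|_q\le C(\|u\|_2+\sqrt t\,\|\nabla u\|_2)$, and then uses the form bound of Lemma~\ref{lemma2} (i.e.\ $\|\nabla u\|_2^2\le C\|(\Delta-V_-+\nu)^{1/2}u\|_2^2$ for large $\nu$) to transfer this inequality to the perturbed operator. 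Together with the $L^1$ bound of Lemma~\ref{lemma31} and the (standard) Davies--Gaffney estimates for $\Delta-V_-+\nu$, this places one directly in the hypotheses of \cite{bcs}, Theorem~1.2.1, which outputs the full Gaussian bound.

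Your Dyson series route is a legitimate alternative and is the classical Aizenman--Simon style argument for Kato-class perturbations; it is more hands-on and keeps the Gaussian profile explicit term by term, at the cost of heavier bookkeeping. Your second alternative is close in spirit to the paper's, but the phrase ``ultracontractive on-diagonal bound derived from the $L^1$--$L^1$ estimate of Lemma~\ref{lemma31} together with (\ref{G3})'' hides exactly the step the paper makes precise: one does not get an on-diagonal bound for $q_t$ from the $L^1$--$L^1$ estimate and (\ref{G3}) alone without some form comparison or Nash-type inequality for the perturbed form. The paper's use of Lemma~\ref{lemma2} to move the Gagliardo--Nirenberg inequality over to $\Delta-V_-+\nu$ is precisely what fills that gap, and is the cleanest way to close your second argument.
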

\begin{proof}
By the well known domination (cf. \cite{HSU}, \cite{Berard} or \cite{Ou98}) we have the pointwise estimate
\begin{equation}\label{eq32}
| e^{-t \ddd_k} \oo(x) |_x \le e^{-t(\Delta - V_-)} |\oo(x)|_x.
\end{equation}
Thus, $| \overrightarrow{p_k}(t,x,y)|$ is  bounded by the heat kernel $p^{V_-}(t,x,y)$ of the Schr\"odinger operator $\Delta - V_-$.
It is then enough to prove the above upper bound for $p^{V_-}(t,x,y)$. 

Since the heat kernel $p(t,x,y)$ of $\Delta$ has a Gaussian  upper bound \eqref{G3} it follows that the following Gagliardo-Nirenberg type inequality holds
\begin{equation}\label{eq33}
\|v(.,\sqrt{t})^{\frac{1}{2}-\frac{1}{q}}u\|_q\le C(\|u\|_2+\sqrt{t}\|\nabla u\|_2),
\end{equation} 
for all  $q \in (2, \infty)$  such that $\frac{q-2}{q}D<2$ (and $u\in W^{1,2}(M)$). See \cite{ao}, Proposition 2.1 and p. 1125 or \cite{bcs}, Theorem 1.2.1.
On the other hand, using Lemma \ref{lemma2} we see that for $\nu > 0$ large enough,
$$ \| \nabla u \|_2^2 \le C \| (\Delta - V_- + \nu)^{1/2} u \|_2^2.$$
Therefore, \eqref{eq33} implies 
\begin{equation}\label{eq34}
\|v(.,\sqrt{t})^{\frac{1}{2}-\frac{1}{q}}u\|_q\le C(\|u\|_2+\sqrt{t}\| (\Delta - V_- + \nu)^{1/2} u \|_2).
\end{equation} 
This together with the fact that the semigroup $(e^{-t(\Delta - V_- + \nu)})_{t\ge0}$ is uniformly bounded on $L^1(M)$ (cf. Lemma \ref{lemma31}) allows us to apply \cite{bcs}, Theorem 1.2.1  and conclude that $p^{V_-}(t,x,y) e^{-\nu t}$ satisfies a full Gaussian upper bound. One missing thing before applying the result from \cite{bcs} is that the semigroup
$T_t := e^{-t(\Delta -V_- + \nu)}$ has to satisfy  $L^2-L^2$ Davies-Gaffney estimates
$$\|T_t u\|_{L^2(F)}\le e^{-\frac{\rho^2(E,F)}{ct}}\|u\|_2,$$
where $E, F$ are two closed subsets of $M$ and $u$ has support in $E$. This is indeed true and it is a standard fact for Schr\"odinger operators. 
\end{proof}


In order to improve the Gaussian upper bound given here and obtain the bound $(iii)$ of Theorem \ref{theorem1} we take advantage of the fact that $(e^{-t\ddd_k})_{t\ge0}$ is uniformly bounded on $L^2(\tmk)$ (it is even a contraction semigroup since $\ddd_k$ is non-negative). This strategy was used in \cite{Ouhabaz2} and we follow the same arguments which we adapt to the setting of differential forms. 
\begin{prop}\label{semigaussian1}
We have 
\begin{equation}\label{upperbound2}
|\overrightarrow{p_k}(t,x,y)|\le\frac{C\left(1+ t+\frac{\rho^2(x,y)}{t}\right)^{\frac{D}{2}}}{v(x,\sqrt{t})^{\frac{1}{2}}v(y,\sqrt{t})^{\frac{1}{2}}}\exp\left(-\frac{\rho^2(x,y)}{4t}\right),
\end{equation}
for  some positive constant $C$. 
\end{prop}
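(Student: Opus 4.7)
The plan is to upgrade Lemma \ref{lemma32} by suppressing the $e^{\nu t}$ growth through the $L^2$-contractivity of $(e^{-t\ddd_k})_{t\ge 0}$, at the cost of a polynomial correction and a sharper Gaussian off-diagonal factor. Since Lemma \ref{lemma32} already yields \eqref{upperbound2} for $0 < t \le 1$ (the prefactor in \eqref{upperbound2} is $\ge 1$ and $e^{\nu t}$ is bounded), only the regime $t \ge 1$ requires argument. The scheme, borrowed from \cite{Ouhabaz2} and adapted to the vector-valued setting, has two steps: an on-diagonal $L^2\to L^\infty$ bound with polynomial growth, then a Davies perturbation to recover the Gaussian decay in $\rho(x,y)$.

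\textbf{Step 1 (on-diagonal bound).} Squaring Lemma \ref{lemma32} at $t=1$ and integrating via Lemma \ref{integral} gives $\|\overrightarrow{p_k}(1,x,\cdot)\|_2 \le C v(x,1)^{-1/2}$, i.e.\ an $L^2\to L^\infty$ pointwise ultracontractive estimate at time $1$. Writing $e^{-t\ddd_k} = e^{-\ddd_k}\circ e^{-(t-1)\ddd_k}$ and using the $L^2$-contractivity of the second factor, this bound persists for all $t\ge 1$:
\begin{equation*}
\|\overrightarrow{p_k}(t,x,\cdot)\|_2 \le \frac{C}{v(x,1)^{1/2}}.
\end{equation*}
Doubling replaces $v(x,1)^{-1}$ by $C(1+t)^{D/2}v(x,\sqrt{t})^{-1}$. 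Combining the $x$- and $y$-sided bounds via
\begin{equation*}
\overrightarrow{p_k}(t,x,y) = \int_M \overrightarrow{p_k}(t/2,x,z)\circ \overrightarrow{p_k}(t/2,z,y)\,d\mu(z),
\end{equation*}
Cauchy--Schwarz, and self-adjointness of $\ddd_k$ (which identifies the $y$-side $L^2$-norm with an $x$-side one) yields
\begin{equation*}
|\overrightarrow{p_k}(t,x,y)| \le \frac{C(1+t)^{D/2}}{v(x,\sqrt{t})^{1/2}v(y,\sqrt{t})^{1/2}}, \quad t\ge 1.
\end{equation*}

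\textbf{Step 2 (Davies perturbation).} For a bounded Lipschitz $\phi$ with $\|\nabla\phi\|_\infty \le \alpha$, set $T_t^\phi := e^\phi e^{-t\ddd_k} e^{-\phi}$. Because $R_k$ acts pointwise as an endomorphism in the Bochner decomposition \eqref{bochner-k}, it commutes with multiplication by the scalar $\phi$, so $[\ddd_k,\phi] = [\nabla^*\nabla,\phi]$ reduces to a first-order operator controlled by $\alpha$. A standard energy computation then yields $\|T_t^\phi\|_{2-2}\le e^{\alpha^2 t}$. Rerunning Step 1 for $T_t^\phi$ --- the Gagliardo--Nirenberg inequality \eqref{eq33} used in the proof of Lemma \ref{lemma32} transfers to the perturbed operator at the price of an $e^{c\alpha^2}$ loss at unit time --- produces
\begin{equation*}
|e^{\phi(x)-\phi(y)}\overrightarrow{p_k}(t,x,y)| \le \frac{C(1+t)^{D/2}(1+\alpha^2 t)^{D/2}\,e^{\alpha^2 t}}{v(x,\sqrt{t})^{1/2}v(y,\sqrt{t})^{1/2}}.
\end{equation*}
Specialising $\phi(z) = \alpha\rho(z,y)$ (which is $\alpha$-Lipschitz) and optimizing in $\alpha\ge 0$ by the choice $\alpha = \rho(x,y)/(2t)$ produces the Gaussian factor $e^{-\rho(x,y)^2/(4t)}$, while the remaining $\alpha^2 t = \rho(x,y)^2/(4t)$ is absorbed into the polynomial factor. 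This gives \eqref{upperbound2}.

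\textbf{Main obstacle.} The crux is establishing the Davies--Gaffney bound $\|T_t^\phi\|_{2-2}\le e^{\alpha^2 t}$ and extending the Gagliardo--Nirenberg argument of Lemma \ref{lemma32} to the perturbed semigroup. The decisive observation is that $R_k$ commutes with multiplication by a scalar, so the entire perturbation analysis reduces to the connection Laplacian $\nabla^*\nabla$, just as in the scalar case treated in \cite{Ouhabaz2}; the zeroth-order term $-R_k^-$ is then absorbed via the pointwise domination \eqref{eq32} by the Schr\"odinger operator $\Delta - V_-$, exactly as in the proof of Lemma \ref{lemma32}.
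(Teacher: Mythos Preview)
Your overall strategy---Davies conjugation $T_t^\phi=e^{\phi}e^{-t\ddd_k}e^{-\phi}$, the bound $\|T_t^\phi\|_{2-2}\le e^{\alpha^2 t}$, and propagation from a short-time Gaussian estimate to trade the $e^{\nu t}$ growth for a polynomial prefactor---is exactly the paper's. Step~1 is correct and is the $\lambda=0$ case of the paper's Steps~2--3.

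There is, however, a gap in your Step~2. First, the detour through Gagliardo--Nirenberg for the perturbed operator is unnecessary and not justified as written; the paper instead simply multiplies the Gaussian of Lemma~\ref{lemma32} by $e^{\lambda(\phi(y)-\phi(x))}$ and absorbs $e^{|\lambda|\rho}$ into the Gaussian, obtaining directly $|\overrightarrow{k}_\lambda(t,x,y)|\le C\,v(x,\sqrt t)^{-1}e^{\nu t+\lambda^2 t/(2c)}e^{-c\rho^2/(2t)}$. Second, and more seriously, your displayed intermediate bound with the polynomial factor $(1+\alpha^2 t)^{D/2}$ does not follow from the argument you sketch: an ``$e^{c\alpha^2}$ loss at unit time'' remains exponential, and splitting at the fixed time $1$ followed by optimization in $\alpha$ only yields $\exp\bigl(-\rho^2/(4(t+2c))\bigr)$, not the sharp $\exp(-\rho^2/(4t))$ of \eqref{upperbound2}. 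The paper's remedy is to split not at time~$1$ but at the $\lambda$-dependent time $1/\beta$, where $\beta=(\tfrac{1}{2c}-1)\lambda^2+\nu$: for $t\le 1/\beta$ one uses the crude bound directly, for $t>1/\beta$ one propagates from $1/\beta$ via the $L^2$-bound. This leaves exactly $e^{\lambda^2 t}$ in the exponential while routing the extra $\lambda^2$-loss into the volume factor $v(x,\sqrt{1/\beta})$; doubling then converts that volume ratio into the polynomial $(1+t+\rho^2/t)^{D/2}$, and the choice $\lambda=(\phi(y)-\phi(x))/(2t)$ produces exactly $e^{-\rho^2/(4t)}$.
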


\begin{proof}
We apply  Davies's perturbation method. Let $\lambda\in\mathbb{R}$ and $\phi\in\mathcal{C}_0^{\infty}(M)$ such that $|\nabla\phi|\le 1$ on $M$. We consider the semigroup $\overrightarrow{T}_{t,\lambda}:=e^{-\lambda\phi}e^{-t\overrightarrow{\Delta_k}}e^{\lambda\phi}$ and its integral kernel

$$\overrightarrow{k}_{\lambda}(t,x,y)=e^{-\lambda(\phi(x)-\phi(y))}\overrightarrow{p_k}(t,x,y).$$  

\noindent \underline{Step 1}. As a  consequence of $|\nabla\phi|\le 1$ and Lemma \ref{lemma32} we have
\begin{align*}
|\overrightarrow{k}_{\lambda}(t, x,y)|
&\le \frac{Ce^{\nu t}e^{|\lambda||\phi(x)-\phi(y)|}}{v(x,\sqrt{t})}\exp\left(-c\frac{\rho^2(x,y)}{t}\right)\\
&\le \frac{Ce^{\nu t}}{v(x,\sqrt{t})}\exp\left(-c\frac{\rho^2(x,y)}{t}+|\lambda|\rho(x,y)\right)\\
&\le \frac{Ce^{\nu t}}{v(x,\sqrt{t})}\exp\left(\frac{1}{2c}\lambda^2 t\right)\exp\left(-\frac{c}{2}\frac{\rho^2(x,y)}{t}\right).
\end{align*}

\noindent \underline{Step 2}. We prove that there exists a constant $C$ independent of $\lambda$ and $\phi$ such that for all $t>0$, $x,y\in M$ and $\lambda\in\mathbb{R}$
\begin{equation}\label{equa33}
\int_M |\overrightarrow{k}_{\lambda}(\frac{t}{2}, x,y)|^2d\mu(y)\le \frac{Ce^{\lambda^2 t}}{v\left(x,\sqrt{\min(\frac{t}{2},\frac{1}{\beta})}\right)},
\end{equation}

\noindent where $\beta:=(\frac{1}{2c}-1)\lambda^2+\nu$. One can obviously take the constant $c$ small enough in the estimate of Step 1 
so that  $\frac{1}{2c}>1$. Note that if $\beta=0$, then $\lambda=\nu=0$ and (\ref{equa33}) follows from the estimate in Step 1 and  Lemma \ref{integral}.  Hence we may assume in the sequel that  $\beta>0$. \\

\noindent We fix $t>0$. According to Step 1, if $t\le\frac{1}{\beta}$
\begin{align*}
\int_M |\overrightarrow{k}_{\lambda}(t,x,y)|^2d\mu(y)
&\le \frac{Ce^{(\frac{1}{c}\lambda^2+2\nu)t}}{v(x,\sqrt{t})^2}\int_M e^{-c\frac{\rho^2(x,y)}{t}}d\mu(y) \\
&\le \frac{Ce^{2\beta t}}{v(x,\sqrt{t})}e^{2t\lambda^2}\le\frac{Ce^2}{v(x,\sqrt{t})}e^{2t\lambda^2}.
\end{align*}

\noindent Thus (\ref{equa33}) holds  for all $t\le\frac{1}{\beta}$. \\
Next, we suppose $t>\frac{1}{\beta}$. The semigroup property implies 
\begin{equation}\label{equa4}
\int_M |\overrightarrow{k}_{\lambda}(t,x,y)|^2d\mu(y)\le \left\|\overrightarrow{T}_{t-\frac{1}{\beta},\lambda}\,\overrightarrow{k}_{\lambda}(\frac{1}{\beta}, x,.)\right\|_2^2\le e^{2(t-\frac{1}{\beta})\lambda^2}\left\|\overrightarrow{k}_{\lambda}(\frac{1}{\beta}, x,.)\right\|_2^2.
\end{equation}

\noindent The last inequality uses 

$$\|\overrightarrow{T}_{t,\lambda}\|_{2,2}\le e^{\lambda^2 t},\forall t\ge 0,$$

\noindent which follows from that fact that the operator  $\overrightarrow{A}_{\lambda}+\lambda^2$ is positive,  
where $-\overrightarrow{A}_{\lambda}$ denotes the generator of the semigroup $(\overrightarrow{T}_{t,\lambda})_{t\ge 0}$. For more details see the proof of \cite{M} Proposition 3.7 in the case $k = 1$, the arguments works for general $k \ge 1$.  Now we use the inequality

$$\left\|\overrightarrow{k}_{\lambda}(\frac{1}{\beta}, x,.)\right\|_2^2\le \frac{C}{v(x,\sqrt{\frac{1}{\beta}})}e^{\frac{2\lambda^2}{\beta}},$$

\noindent proved above (in the case $t\le \frac{1}{\beta}$) to obtain 

$$\left\|\overrightarrow{k}_{\lambda}(t, x,.)\right\|_2^2\le \frac{C}{v(x,\sqrt{\frac{1}{\beta}})}e^{2t\lambda^2}.$$

\noindent This proves (\ref{equa33}).\\

\noindent \underline{Step 3}. We prove that for all $t>0$ and $x,y\in M$
\begin{equation}\label{equa5}
|\overrightarrow{k}_{\lambda}(t,x,y)|\le \frac{Ce^{\lambda^2 t}}{v\left(x,\sqrt{\min(\frac{t}{2},\frac{1}{\beta})}\right)^{\frac{1}{2}}v\left(y,\sqrt{\min(\frac{t}{2},\frac{1}{\beta})}\right)^{\frac{1}{2}}}.
\end{equation}

\noindent First, changing $\lambda$ into $-\lambda$ in Step 2 gives by duality
\begin{equation}\label{equa6}
\int_M |\overrightarrow{k}_{\lambda}(\frac{t}{2},x,y)|^2d\mu(x)\le \frac{Ce^{\lambda^2 t}}{v\left(y,\sqrt{\min(\frac{t}{2},\frac{1}{\beta})}\right)}.
\end{equation}

\noindent The semigroup property implies

$$|\overrightarrow{k}_{\lambda}(t,x,y)|\le\int_M |\overrightarrow{k}_{\lambda}(\frac{t}{2},x,z)||\overrightarrow{k}_{\lambda}(\frac{t}{2},z,y)|d\mu(z).$$

\noindent Using the Cauchy-Schwarz inequality, (\ref{equa33}) and (\ref{equa6}), we obtain
\begin{align*}
|\overrightarrow{k}_{\lambda}(t,x,y)|
&\le\left(\int_M |\overrightarrow{k}_{\lambda}(\frac{t}{2},x,z)|^2 d\mu(z)\right)^{\frac{1}{2}}\left(\int_M |\overrightarrow{k}_{\lambda}(\frac{t}{2},z,y)|^2 d\mu(z)\right)^{\frac{1}{2}}\\
&\le \frac{Ce^{\lambda^2 t}}{v\left(x,\sqrt{\min(\frac{t}{2},\frac{1}{\beta})}\right)^{\frac{1}{2}}v\left(y,\sqrt{\min(\frac{t}{2},\frac{1}{\beta})}\right)^{\frac{1}{2}}}.
\end{align*}

\noindent \underline{Step 4}. We claim  that for all $t>0$ and $x,y\in M$
\begin{equation}\label{equa37}
|\overrightarrow{p_k}(t,x,y)|\le\frac{C}{v(x,\sqrt{r})^{\frac{1}{2}}v(y,\sqrt{r})^{\frac{1}{2}}}\exp\left(-\frac{\rho^2(x,y)}{4t}\right),
\end{equation}

\noindent where 

$$r:= \min\left(\frac{t}{2},\left[\left(\frac{1}{2c}-1\right)\frac{\rho^2(x,y)}{4t^2}+\nu\right]^{-1}\right).$$

\noindent The estimate (\ref{equa5}) and the definition of $\overrightarrow{k}_{t,\lambda}(x,y)$ give

$$|\overrightarrow{p_k}(t,x,y)|\le\frac{Ce^{\lambda^2 t}}{v\left(x,\sqrt{\min(\frac{t}{2},\frac{1}{\beta})}\right)^{\frac{1}{2}}v\left(y,\sqrt{\min(\frac{t}{2},\frac{1}{\beta})}\right)^{\frac{1}{2}}}e^{\lambda(\phi(x)-\phi(y))}.$$

\noindent Choosing $\lambda=\frac{\phi(y)-\phi(x)}{2t}$, we obtain

$$|\overrightarrow{p_k}(t,x,y)|\le\frac{C}{v\left(x,\sqrt{\min(\frac{t}{2},\frac{1}{\beta})}\right)^{\frac{1}{2}}v\left(y,\sqrt{\min(\frac{t}{2},\frac{1}{\beta})}\right)^{\frac{1}{2}}}\exp\left(-\frac{|\phi(x)-\phi(y)|^2}{4t}\right),$$

\noindent with

$$\beta= \left(\frac{1}{2c}-1\right)\frac{|\phi(x)-\phi(y)|^2}{4t^2}+\nu.$$

\noindent Since $|\nabla\phi|\le 1$, we have $|\phi(x)-\phi(y)|\le\rho(x,y)$. We deduce that

$$|\overrightarrow{p_k}(t,x,y)|\le\frac{C}{v\left(x,\sqrt{r}\right)^{\frac{1}{2}}v\left(y,\sqrt{r}\right)^{\frac{1}{2}}}\exp\left(-\frac{|\phi(x)-\phi(y)|^2}{4t}\right),$$

\noindent with $r$ as above. We optimize over $\phi$ and obtain (\ref{equa37}).\\

\noindent \underline{Step 5}. We deduce (\ref{upperbound2}) using assumption (\ref{D3}). Indeed, noting that 

$$v(x,\sqrt{t})\le v(x,\sqrt{r})\left(\frac{t}{r}\right)^{\frac{D}{2}}$$

\noindent and

$$\frac{t}{r}\le 2+\left(\frac{1}{2c}-1\right)\frac{\rho^2(x,y)}{4t}+\nu t,$$

\noindent we obtain for all $t>0$ and $x,y\in M$

$$|\overrightarrow{p_k}(t,x,y)|\le\frac{C}{v(x,\sqrt{t})^{\frac{1}{2}}v(y,\sqrt{t})^{\frac{1}{2}}}\exp\left(-\frac{\rho^2(x,y)}{4t}\right)\left(1+\nu t+\frac{\rho^2(x,y)}{t}\right)^{\frac{D}{2}}.$$
\end{proof}

\noindent The following result was  proved in \cite{coulz} under some additional assumptions. Since the ideas of the proof are the same we just sketch them.

\begin{prop}\label{semigaussian2}
With the same assumptions as in the previous theorem, there exists $c,C>0$ such that for all $t\ge 1$ and $x,y\in M$
\begin{equation*}
|\overrightarrow{p_k}(t,x,y)|\le C\exp\left(-c\frac{\rho^2(x,y)}{t}\right).
\end{equation*}
\end{prop}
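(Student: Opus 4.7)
The approach refines the bound of Proposition \ref{semigaussian1} by replacing the exponentially-growing factor $e^{\nu t}$ of Lemma \ref{lemma32} on the long middle time interval by the dissipative $L^2$--$L^2$ estimate $\|e^{-t\overrightarrow{\Delta}_k}\|_{2-2}\le 1$. This is the Davies perturbation strategy of Coulhon--Zhang \cite{coulz}. First, I would re-use the twisted semigroup $\overrightarrow{T}_{t,\lambda}:=e^{-\lambda\phi}e^{-t\overrightarrow{\Delta}_k}e^{\lambda\phi}$ from the proof of Proposition \ref{semigaussian1}, with kernel $\overrightarrow{k}_\lambda(t,x,y)=e^{-\lambda(\phi(x)-\phi(y))}\overrightarrow{p_k}(t,x,y)$ and $|\nabla\phi|\le 1$, $\lambda\in\mathbb{R}$. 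The non-negativity and self-adjointness of $\overrightarrow{\Delta}_k$ give $\|e^{-t\overrightarrow{\Delta}_k}\|_{2-2}\le 1$, and the same computation as in the proof of Proposition \ref{semigaussian1} yields $\|\overrightarrow{T}_{t,\lambda}\|_{2-2}\le e^{\lambda^2 t}$ for all $t\ge 0$.

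Next, for $t\ge 1$ I would decompose
\[
\overrightarrow{T}_{t,\lambda} = \overrightarrow{T}_{1/2,\lambda}\,\overrightarrow{T}_{t-1,\lambda}\,\overrightarrow{T}_{1/2,\lambda}
\]
and apply Cauchy--Schwarz to the resulting triple kernel composition. Controlling the middle factor by the $L^2$--$L^2$ bound above produces
\[
|\overrightarrow{k}_\lambda(t,x,y)|\le e^{\lambda^2(t-1)}\,\|\overrightarrow{k}_\lambda(1/2,x,\cdot)\|_2\,\|\overrightarrow{k}_\lambda(1/2,\cdot,y)\|_2.
\]
The key structural point is that the time dependence is now carried by $e^{\lambda^2(t-1)}$ alone, rather than by an uncontrolled $e^{\nu t}$ from a Miyadera perturbation argument. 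The two short-time $L^2$-norms on the right are controlled by Step 2 of the proof of Proposition \ref{semigaussian1}, applied at the fixed time $t=1$: this gives a bound of the form $C e^{\lambda^2/2}$, with the $v(x,\sqrt{\min(1/2,1/\beta)})^{-1/2}$ factor absorbed via the doubling property into a constant depending polynomially on $\lambda$.

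Combining, I would obtain $|\overrightarrow{k}_\lambda(t,x,y)|\le C e^{\lambda^2 t}$. Unwinding the twist gives $|\overrightarrow{p_k}(t,x,y)|\le C e^{\lambda^2 t + \lambda(\phi(x)-\phi(y))}$. I would conclude by the standard Davies optimisation: taking the supremum over admissible $\phi$ (which yields $\phi(x)-\phi(y)=-\rho(x,y)$) and choosing $\lambda=\rho(x,y)/(2t)$ produces the Gaussian factor $\exp(-\rho^2(x,y)/(4t))$, from which the stated bound follows after adjusting $c$ to absorb the mild polynomial corrections picked up along the way.

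The hardest part, as already encountered in the proof of Proposition \ref{semigaussian1}, is the bookkeeping: the short-time $L^2$-norm $\|\overrightarrow{k}_\lambda(1/2,x,\cdot)\|_2$ carries a $v(x,\sqrt{\min(1/2,1/\beta)})^{-1/2}$ factor with $\beta$ growing like $\lambda^2$, and controlling this uniformly enough to survive the optimisation in $\lambda$ without destroying the Gaussian is the most delicate piece. The dissipative bound $\|\overrightarrow{T}_{t-1,\lambda}\|_{2-2}\le e^{\lambda^2(t-1)}$ is what makes the argument work for all large $t$; replacing it by the $L^1$--$L^1$ bound of Lemma \ref{lemma31} would reintroduce the prohibitive $e^{\nu t}$ factor and only recover Proposition \ref{semigaussian1}.
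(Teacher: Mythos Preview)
Your route differs from the paper's. You recycle the Davies twisted semigroup $\overrightarrow{T}_{t,\lambda}$ from Proposition~\ref{semigaussian1} and factor it as $\overrightarrow{T}_{1/2,\lambda}\,\overrightarrow{T}_{t-1,\lambda}\,\overrightarrow{T}_{1/2,\lambda}$, controlling the middle piece via $\|\overrightarrow{T}_{s,\lambda}\|_{2-2}\le e^{\lambda^2 s}$ and then optimising in $\lambda,\phi$. The paper instead invokes the Grigor'yan--Coulhon--Zhang weighted $L^2$ monotonicity: for $A$ large enough,
\[
t\longmapsto \int_M |\overrightarrow{p_k}(t,x,y)|^2\,e^{\rho^2(x,y)/(At)}\,d\mu(x)
\]
is non-increasing on $[1,\infty)$, which is proved by differentiating in $t$ and integrating by parts against the heat equation (cf.~\cite{coulz}, Theorem~4.1, Step~1). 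Both arguments encode exactly the same $L^2$-dissipativity of $e^{-t\overrightarrow{\Delta}_k}$ under Gaussian weights; the paper's parabolic version sidesteps the twist and the $\lambda$-optimisation and reduces directly to the short-time estimate at $t=1$, while your semigroup version has the advantage of staying inside the machinery already set up for Proposition~\ref{semigaussian1}.

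There is, however, a gap in your bookkeeping. You assert that the factor $v\bigl(x,\sqrt{\min(1/2,1/\beta)}\bigr)^{-1/2}$ is ``absorbed via the doubling property into a constant depending polynomially on $\lambda$''. Doubling indeed gives $v(x,1/\sqrt{\beta})^{-1/2}\le C\beta^{D/4}\,v(x,1)^{-1/2}$, and the factor $\beta^{D/4}\sim |\lambda|^{D/2}$ is polynomial in $\lambda$ and survives the optimisation harmlessly. But the residual $v(x,1)^{-1/2}$ is $x$-dependent and cannot be bounded uniformly without a non-collapsing hypothesis $\inf_x v(x,1)>0$, which the paper explicitly does \emph{not} assume. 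Your argument as written therefore delivers only
\[
|\overrightarrow{p_k}(t,x,y)|\le \frac{C}{\sqrt{v(x,1)\,v(y,1)}}\,\exp\Bigl(-c\,\frac{\rho^2(x,y)}{t}\Bigr),
\]
not the uniform constant in the statement. In fairness, the paper's own sketch---which asserts $\int_M|\overrightarrow{p_k}(t,x,y)|^2 e^{\rho^2(x,y)/(At)}\,d\mu(x)\le C$ and defers to \cite{coulz} for the remaining details---appears to gloss over the very same volume factor (and \cite{coulz} does assume non-collapsing), so you are at least in good company.
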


\begin{proof}
Following the proof of \cite{coulz} Theorem 4.1, Step 1, we find for $A>0$ sufficiently large and $t\ge 1$

$$\int_M |\overrightarrow{p_k}(t,x,y)|^2 e^{\frac{\rho^2(x,y)}{At}}d\mu(x)\le\int_M |\overrightarrow{p_k}(1,x,y)|^2e^{\frac{\rho^2(x,y)}{A}}d\mu(x).$$

\noindent The estimate (\ref{upperbound2}) gives 

$$|\overrightarrow{p_k}(1,x,y)|\le \frac{C}{v(x,1)}e^{-c\rho^2(x,y)}.$$

\noindent We deduce that for $A>0$ sufficiently large and all $t\ge 1$

$$\int_M |\overrightarrow{p_k}(t,x,y)|^2 e^{\frac{\rho^2(x,y)}{At}}d\mu(x)\le C.$$

\noindent Then the rest of the proof is the same as in \cite{coulz} Theorem 4.1, Step 1.
\end{proof}

\noindent We deduce from Propositions \ref{semigaussian1} and \ref{semigaussian2} the following estimate which is the estimate $(iv)$ of Theorem \ref{theorem1}

\begin{equation}
|\overrightarrow{p_k}(t,x,y)|\le C\min\left(1,\frac{t^{\frac{D}{2}}}{v(x,\sqrt{t})}\right)\exp\left(-c\frac{\rho^2(x,y)}{t}\right).
\end{equation}

\noindent Now we prove assertion $(ii)$ of Theorem \ref{theorem1}.  This result can be found in \cite{coulhonduong} where the authors suppose that $\overrightarrow{p_k}(t,x,y)$ satisfies a full Gaussian upper bound instead of (\ref{upperbound2}). We give the proof for the sake of completeness.

\begin{prop}\label{gradient}
For all $2\le p\le\infty$ and $t\ge 1$, 
$$|\nabla_x p(t,x,y)|\leqslant \frac{Ct^{\frac{D}{2}}}{\sqrt{t}\,v(x,\sqrt{t})}e^{-c\frac{\rho^2(x,y)}{t}},\;\forall x,y\in M,\forall t>1.$$
and 
$$\|\nabla e^{-t\Delta}\|_{p,p}\le C t^{-\frac{1}{p}}.$$
\end{prop}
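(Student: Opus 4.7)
The plan is to decouple the pointwise kernel bound from the $L^p$ operator norm bound; both rest on a common short-time gradient estimate for $p(t,x,y)$ itself.

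\textbf{Short-time gradient ingredient.} First, under \eqref{D3} and \eqref{G3}, I would establish
\[
|\nabla_x p(t,x,y)|\le\frac{C}{\sqrt{t}\,v(x,\sqrt{t})}\,e^{-c\rho^2(x,y)/t},\qquad t\in(0,1].
\]
This is a classical parabolic regularity statement: $L^2$-analyticity of $(e^{-t\Delta})_{t\ge 0}$ together with the identity $\|\nabla u\|_2=\|\Delta^{1/2}u\|_2$ gives $\|\nabla e^{-t\Delta}\|_{2\to2}\le Ct^{-1/2}$; Davies' perturbation method upgrades this to an $L^2$ off-diagonal estimate for $\nabla e^{-t\Delta}$, and a Moser-type $L^2\to L^\infty$ mean value passage (relying on the Gaussian bound on $p$ itself) yields the pointwise version. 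This is the regularity input on which everything else rests.

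\textbf{Pointwise bound for $t>1$.} Here the plan is to exploit the commutation
\[
d\,e^{-t\Delta}=e^{-(t-1)\ddd}\,d\,e^{-\Delta},
\]
which at the level of kernels reads
\[
d_x p(t,x,y)=\int_M \overrightarrow{p_1}(t-1,x,z)\,d_z p(1,z,y)\,d\mu(z).
\]
Inserting the bound $|\overrightarrow{p_1}(t-1,x,z)|\le C(t-1)^{D/2}v(x,\sqrt{t-1})^{-1}e^{-c\rho^2(x,z)/(t-1)}$ from Theorem~\ref{theorem1}(iv) and the short-time bound on $|d_z p(1,z,y)|$ above, and running the standard Gaussian composition lemma under doubling (which rebalances the two scales $\sqrt{t-1}$ and $1$ into the single scale $\sqrt t$ and uses $\rho^2(x,z)/(t-1)+\rho^2(z,y)\ge c'\rho^2(x,y)/t$ in the exponents) gives the advertised pointwise upper bound on $|\nabla_x p(t,x,y)|$, with the $t^{D/2}$ factor entering directly from the estimate on $\overrightarrow{p_1}$.

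\textbf{$L^p$ operator bound.} For the operator estimate the plan is to interpolate between the endpoints $p=2$ and $p=\infty$. The $L^2$ estimate $\|\nabla e^{-t\Delta}\|_{2\to2}\le Ct^{-1/2}$ has already been noted. For $p=\infty$, for $t\ge 1$ I factor
\[
\nabla e^{-t\Delta}=(\nabla e^{-\Delta/2})\circ e^{-(t-1/2)\Delta},
\]
and since $(e^{-s\Delta})_{s\ge 0}$ is contractive on $L^\infty$ by \eqref{eq0}, it suffices to establish $\|\nabla e^{-\Delta/2}\|_{\infty\to\infty}\le C$. But this follows from
\[
\|\nabla e^{-\Delta/2}\|_{\infty\to\infty}\le\sup_{x\in M}\int_M|\nabla_x p(1/2,x,y)|\,d\mu(y),
\]
since the short-time gradient bound combined with Lemma~\ref{integral} shows this supremum is finite. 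Hence $\|\nabla e^{-t\Delta}\|_{\infty\to\infty}\le C$ uniformly in $t\ge 1$, and Riesz--Thorin interpolation between the $L^2\to L^2$ and $L^\infty\to L^\infty$ endpoint estimates yields $\|\nabla e^{-t\Delta}\|_{p\to p}\le C_p t^{-1/p}$ for every $p\in[2,\infty]$.

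\textbf{Main obstacle.} The critical step is the short-time pointwise gradient bound on $p(t,x,y)$: under just \eqref{G3} and \eqref{D3} this is a nontrivial parabolic regularity estimate, and it is the ingredient the authors borrow from \cite{coulhonduong}. Everything else (commutation, Gaussian convolution, endpoint interpolation) is routine once that is in hand; the only significant bookkeeping lies in tracking doubling factors through the scale-rebalancing convolution in the large-time pointwise argument.
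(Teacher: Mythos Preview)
Your overall architecture---commute $d$ through the semigroup, then interpolate between $p=2$ and $p=\infty$---matches the paper's, but the key ingredient is misidentified and the gap is real. You take as input a short-time \emph{pointwise} gradient bound
\[
|\nabla_x p(t,x,y)|\le\frac{C}{\sqrt{t}\,v(x,\sqrt{t})}\,e^{-c\rho^2(x,y)/t},\qquad t\in(0,1],
\]
and assert that it follows from \eqref{D3}+\eqref{G3} via a ``Moser-type $L^2\to L^\infty$ passage.'' That passage is not available: a Moser/De~Giorgi iteration for $|\nabla u|$ requires a differential inequality for $|\nabla u|^2$, which comes from the Bochner formula and hence needs a lower bound on the Ricci curvature---precisely the information you are trying to avoid at this stage. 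Under \eqref{D3}+\eqref{G3} alone, what is actually known (Grigor'yan \cite{Gr2}) is only the \emph{weighted $L^2$} estimate
\[
\int_M |\nabla_x p(t,x,y)|^2\,e^{\nu\rho^2(x,y)/t}\,d\mu(x)\le \frac{C}{t\,v(y,\sqrt t)},\qquad t>0,
\]
and this is what the paper uses.

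The paper's route is: from this integrated estimate and Cauchy--Schwarz one extracts $\int_M |\nabla_z p(t/2,z,y)|\,e^{\gamma\rho^2(z,y)/t}\,d\mu(z)\le C t^{-1/2}$; then, writing $\nabla_x p(t,x,y)=\int_M \overrightarrow{p_1}(t/2,x,z)\,\nabla_z p(t/2,z,y)\,d\mu(z)$ via the commutation $\ddd d=d\Delta$, one inserts the form-heat-kernel bound \eqref{upperbound2} (this is where the hypothesis $R^-\in\hat K$ actually enters) and obtains the pointwise gradient estimate directly. In particular, the pointwise bound you wanted as an \emph{input} is here an \emph{output}; evaluated at $t=1$ it gives $\|\nabla e^{-\Delta}\|_{\infty\to\infty}<\infty$, after which your interpolation argument runs verbatim. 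Your reading that the authors ``borrow'' a short-time pointwise gradient bound from \cite{coulhonduong} is off: what is borrowed is the commutation-and-convolution scheme, with Grigor'yan's integrated $L^2$ estimate as the genuine scalar input.
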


\begin{proof}
The Gaussian upper bound \eqref{G3} and the doubling condition \eqref{D3} imply that there exist $\nu > 0$ small enough such that 
	
\begin{equation}\label{eqgra}
\int_M |\nabla_x p(t,x,y)|^2e^{\nu \frac{\rho^2(x,y)}{t}} d\mu(x)\leqslant \frac{C}{t\,v(y,\sqrt{t})},  \ y \in M,   t > 0
\end{equation}
This is proved in \cite{Gr2} (see also \cite{could}). In addition, by 
the Cauchy-Schwarz inequality we have for  $\gamma > 0$ small enough,
\begin{align*}
&\int_M |\nabla_x p(t,x,y)|e^{\gamma \frac{\rho^2(x,y)}{t}} d\mu(x)\\
&\leqslant \left(\int_M |\nabla_x p(t,x,y)|^2 e^{4\gamma\frac{\rho^2(x,y)}{t}}d\mu(x)\right)^{\frac{1}{2}} \left(\int_M e^{-2\gamma\frac{\rho^2(x,y)}{t}}d\mu(x)\right)^{\frac{1}{2}}\\
&\leqslant \left(\frac{C_{\gamma}}{t\,v(y,\sqrt{t})}\right)^{\frac{1}{2}}(C_{\gamma}'v(y,\sqrt{t}))^{\frac{1}{2}}\\
&=\frac{C_{\gamma}''}{\sqrt{t}},
\end{align*}
where we used \eqref{eqgra} and Lemma \ref{integral}. Hence
\begin{equation}\label{eqgra1}
\int_M |\nabla_x p(t,x,y)|e^{\gamma \frac{\rho^2(x,y)}{t}} d\mu(x)\leqslant \frac{C_{\gamma}}{\sqrt{t}}.
\end{equation}
Now we prove that there exist  $c, C>0$ such that
\begin{equation}\label{step4}
|\nabla_x p(t,x,y)|\leqslant \frac{Ct^{\frac{D}{2}}}{\sqrt{t}\,v(x,\sqrt{t})}e^{-c\frac{\rho^2(x,y)}{t}},\;\forall x,y\in M,\forall t>1.
\end{equation}

\noindent First notice that
$$\nabla_x p(t,x,y)=\int_M \nabla_x p(\frac{t}{2}, x,z)p(\frac{t}{2}, z,y)d\mu(z)=\nabla_x e^{-\frac{t}{2}\Delta}f_{y,t}(x),$$

\noindent with $f_{y,t}(z)=p(\frac{t}{2}, z,y)$. According to the commutation formula $\overrightarrow{\Delta}d=d\Delta$ (recall that 
$\ddd = \ddd_1$)  we have 
$$|\nabla_x e^{-\frac{t}{2}\Delta}f_{y,t}(x)|=|e^{-\frac{t}{2}\overrightarrow{\Delta}}df_{y,t}(x)|.$$

\noindent Therefore using the estimate (\ref{upperbound2}) (in the case $k=1$) and (\ref{D3}), we obtain for all $t>1$ and some constant $c > 0$
\begin{align*}
|\nabla_x e^{-\frac{t}{2}\Delta}f_{y,t}(x)|
&\leqslant \frac{Ct^{\frac{D}{2}}}{v(x,\sqrt{t})}\int_M e^{-c\frac{\rho^2(x,z)}{t}}|df_{y,t}(z)|d\mu(z)\\
& =\frac{Ct^{\frac{D}{2}}}{v(x,\sqrt{t})}\int_M e^{-c\frac{\rho^2(x,z)}{t}}|\nabla_z f_{y,t}(z)|d\mu(z).
\end{align*}

\noindent Hence
$$|\nabla_x p(t,x,y)|\leqslant \frac{Ct^{\frac{D}{2}}}{v(x,\sqrt{t})}\int_M e^{-c\frac{\rho^2(x,z)}{t}}|\nabla_z f_{y,t}(z)|d\mu(z).$$

\noindent It suffices then to prove that for some $\gamma<c$, there exist constants $c',C'>0$ such that
$$\int_M e^{-\gamma\frac{\rho^2(x,z)}{t}}|\nabla_z p(\frac{t}{2}, z,y)|d\mu(z)\leqslant \frac{C'}{\sqrt{t}}e^{-c'\frac{\rho^2(x,y)}{t}}.$$

\noindent Using
$$e^{-\gamma\frac{\rho^2(x,z)}{t}}\leqslant e^{-\gamma\frac{\rho^2(x,y)}{2t}}e^{\gamma\frac{\rho^2(z,y)}{t}},$$

\noindent we have
$$\int_M e^{-\gamma\frac{\rho^2(x,z)}{t}}|\nabla_z p(\frac{t}{2}, z,y)|d\mu(z)\leqslant e^{-\gamma\frac{\rho^2(x,y)}{2t}}\int_M e^{\gamma\frac{\rho^2(z,y)}{t}}|\nabla_z p(\frac{t}{2},z,y)|d\mu(z).$$

\noindent According to \eqref{eqgra1}, we know that there exists $\gamma>0$ small enough such that
$$\int_M e^{\gamma\frac{\rho^2(z,y)}{t}}|\nabla_z p(\frac{t}{2},z,y)|d\mu(z)\leqslant \frac{C'}{\sqrt{t}}.$$
This shows  \eqref{step4}. 
	
If follows from Lemma \ref{integral} and \eqref{step4}  that 
\begin{equation}\label{interpolation2}
\|\nabla e^{-t\Delta}\|_{\infty-\infty}\le\frac{Ct^{\frac{D}{2}}}{\sqrt{t}}. 
\end{equation}
We write for $t \ge 1$, $\nabla e^{-t\Delta} = \nabla e^{-\Delta} e^{-(t-1) \Delta}$. We apply 
\eqref{interpolation2} with $t=1$ and the standard estimate 
\begin{equation}\label{interpolation1}
\|\nabla e^{-t\Delta}\|_{2-2}\le\frac{C}{\sqrt{t}},
\end{equation}
to obtain 
$$\|\nabla e^{-t\Delta}\|_{p-p}\le C t^{-\frac{1}{p}}.$$
\end{proof}

 Finally using the estimate in assertion $(iii)$ we prove  assertion $(i)$ in Theorem \ref{theorem1} as in \cite{Ouhabaz2}, Th\'eor\`eme 7.  In order to avoid repetition, we do not give details here and we come back to this in the proof of Theorem \ref{theorem2}, assertion $(i)$. \\

 \noindent{\bf Example}. Let $M = \R^2 \sharp \R^2$. It is proved in \cite{CDS} Proposition 3.3,  that if the gluing is made in such a way that $M$ has genus zero then
 $\ker_{L^2}(\ddd) = \{0 \}$ and $\ker_{L^p}(\ddd) \not= \{0\}$ for all $p > 2$. We claim that $(e^{-t\ddd})_{t\ge0}$ is not uniformly bounded on $L^p(\tm)$ for all $p \not= 2$. \\
 Suppose for a contradiction that $(e^{-t\ddd})_{t\ge0}$ is uniformly bounded on $L^{p_0}(\tm)$ for some $p_0 \not=2$. Arguing by duality, we may assume that $p_0 > 2$. For $\oo = \ddd \eta  \in R_{L^2}(\ddd)$, the range of $\ddd$ as an operator on $L^2(\tm)$, we have 
 $$ \| e^{-t\ddd} \oo \|_2  = \| \ddd e^{-t \ddd} \eta \|_2  \le \frac{C}{t} \| \eta \|_2.$$
 In particular, $e^{-t \ddd} \oo \to 0$ in $L^2(\tm)$ as $t \to \infty$. Since $R_{L^2}(\ddd)$ is dense in $L^2(\tm)$ (thanks to the fact that $\ker_{L^2}(\ddd) = \{0 \}$), $e^{-t \ddd} \oo \to 0$ (as $t \to \infty$) for all $\oo \in L^2(\tm)$. Let $p \in (2, p_0)$ and $\oo \in L^2(\tm) \cap L^{p_0}(\tm)$. The classical interpolation inequality together with the assumption that $(e^{-t\ddd})_{t\ge0}$ is uniformly bounded on $L^{p_0}(\tm)$ imply that $e^{-t \ddd} \oo \to 0$ in $L^p(\tm)$ as $t \to \infty$. Uniform boundedness of $(e^{-t\ddd})_{t\ge0}$ on $L^p(\tm)$ implies that this holds for all $\oo \in L^p(\tm)$. This cannot be the case since $\ker_{L^p}(\ddd) \not= \{0\}$. 
 

\section{Proof of Theorem \ref{theorem2}}\label{section54}

Recall that $p_0:=\frac{2D}{(D-2)(1-\sqrt{1-\epsilon})}$. It is shown in \cite{M} that the semigroup $(e^{-t\ddd})_{t\ge0}$ is uniformly bounded on $L^p(\tm)$ for all $p \in (p_0', p_0)$. The  proof uses perturbation arguments and it works for the semigroup $(e^{-t\ddd_k})_{t\ge0}$ as well for any $k \ge 1$.  Hence the first estimate in assertion $(i)$ of Theorem \ref{theorem2} is known. In addition, it is also proved in \cite{M} that the Riesz transform $d \Delta^{-1/2}$ is bounded on $L^p$ for $p \in (2, p_0)$ and as we already mentioned before, the Riesz transform is bounded on $L^p$ for $p \in (1, 2]$ (cf. \cite{could}). This implies the first estimate in assertion $(ii)$. 
Now we prove the remaining estimates  of Theorem \ref{theorem2}. 

\begin{prop}\label{propouhabaz}
Assume that (\ref{D3}), (\ref{G3}) and $R_k^- \in \hat{K}$.  If the semigroup $(e^{-t \ddd_k})_{t\ge 0}$ is uniformly bounded on $L^p(\tmk)$ for some $p\in (1,2]$ then for  all $t>e$
$$\|e^{-t \ddd_k}\|_{\infty-\infty}\le C (t\log t )^{\frac{D}{2}\left(1-\frac{1}{p}\right)}.$$
\end{prop}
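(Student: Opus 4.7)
The plan is to estimate $|e^{-t\ddd_k}\omega(x_0)|$ pointwise via a truncation argument centered at the evaluation point $x_0$. Fix $x_0 \in M$ and $\omega \in L^\infty(\tmk)$, and set $R := \sqrt{A t \log t}$ for a constant $A$ to be chosen. Split $\omega = \omega_1 + \omega_2$ with $\omega_1 := \omega\chi_{B(x_0, R)}$, so $|e^{-t\ddd_k}\omega(x_0)| \le I_1 + I_2$ with $I_i := |e^{-t\ddd_k}\omega_i(x_0)|$. The goal is to show $I_2 \le C\|\omega\|_\infty$ uniformly and $I_1 \le C(t\log t)^{\frac{D}{2}(1-\frac{1}{p})}\|\omega\|_\infty$; taking the supremum over $x_0$ then yields the result.

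For the tail $I_2$, I would invoke the heat kernel estimate Theorem~\ref{theorem1}(iv), namely $|\overrightarrow{p_k}(t,x_0,y)| \le C\min(1, t^{D/2}/v(x_0,\sqrt{t}))\,e^{-c\rho^2(x_0,y)/t}$ for $t \ge 1$. An annular decomposition of $B(x_0,R)^c$ combined with the doubling property and the key identity $\min(1, t^{D/2}/v(x_0,\sqrt{t})) \cdot v(x_0,\sqrt{t}) \le t^{D/2}$ leads to
\[
I_2 \le C\|\omega\|_\infty\, R^D e^{-cR^2/t}.
\]
Choosing $A$ large enough (for instance $A = D/c$) forces $R^D e^{-cR^2/t} \le C$ for all $t > e$.

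For the local piece $I_1$, I factor $e^{-t\ddd_k} = e^{-\ddd_k}\,e^{-(t-1)\ddd_k}$ and bound
\[
I_1 \le \|\overrightarrow{p_k}(1, x_0, \cdot)\|_p \cdot \|e^{-(t-1)\ddd_k}\omega_1\|_{p'}.
\]
The hypothesis of uniform $L^p$-boundedness, together with the self-adjointness of $\ddd_k$ and duality, yields $\|e^{-(t-1)\ddd_k}\omega_1\|_{p'} \le C_{p'}\|\omega_1\|_{p'} \le C_{p'} v(x_0, R)^{1/p'}\|\omega\|_\infty$. Applying (iv) at time $\tau = 1$ gives $\|\overrightarrow{p_k}(1, x_0, \cdot)\|_p \le C \min(v(x_0,1)^{1/p}, v(x_0,1)^{1/p - 1})$. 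Since $R \ge 1$ for $t > e$, doubling produces $v(x_0, R) \le C R^D v(x_0, 1)$, and checking separately the regimes $v(x_0,1) \ge 1$ and $v(x_0,1) \le 1$ one finds that the exponents in $v(x_0,1)$ cancel and
\[
\|\overrightarrow{p_k}(1, x_0, \cdot)\|_p \cdot v(x_0, R)^{1/p'} \le C R^{D/p'},
\]
independently of $v(x_0,1)$. Thus $I_1 \le C R^{D/p'}\|\omega\|_\infty = C(t\log t)^{D/(2p')}\|\omega\|_\infty$, and since $1/p' = 1 - 1/p$, this is exactly the claimed bound.

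The main technical obstacle is precisely that the volumes $v(x_0, \sqrt{t})$ and $v(x_0, 1)$ are not uniformly bounded in $x_0$, so these factors must cancel for a uniform estimate to emerge. This is why the $\min$-factor in the heat kernel bound (iv) is indispensable: in $I_2$ it cancels $v(x_0, \sqrt{t})$ through the identity $\min \cdot v \le t^{D/2}$, and in $I_1$ it balances the volume growth $v(x_0, R)^{1/p'}$ that appears when expressing $\|\omega_1\|_{p'}$ in terms of $\|\omega\|_\infty$. The logarithmic factor in the final estimate is dictated by the critical choice $R^2 = A t\log t$, the smallest scale at which the Gaussian tail of the heat kernel becomes absorbable in $I_2$.
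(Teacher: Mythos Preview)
Your argument is correct, but it follows a genuinely different route from the paper's. The paper works directly with the kernel: it bounds $\int |\overrightarrow{p_k}(t,x,y)|^p\,d\mu(y)\le C/v(x,1)^{p-1}$ using the $L^p$ hypothesis at time $t-1$ composed with the kernel at time $1$, and separately bounds $\int |\overrightarrow{p_k}(t,x,y)|^{p_\epsilon}\,d\mu(y)$ for a sub-unit exponent $p_\epsilon$ via the polynomial-Gaussian estimate \eqref{upperbound2}. A H\"older inequality between these two, followed by optimization over $\epsilon$ (the minimum occurring at $\epsilon=(p-1)/\log t$), produces the $(t\log t)^{D(1-1/p)/2}$ factor.

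Your spatial-truncation approach trades this interpolation in exponent for a cutoff in space at radius $R=\sqrt{At\log t}$. The tail $I_2$ is controlled by the Gaussian factor in estimate $(iv)$ together with the cancellation $\min(1,t^{D/2}/v)\cdot v\le t^{D/2}$, while the local piece $I_1$ uses the $L^{p'}$ boundedness (from duality) and the balance between $\|\overrightarrow{p_k}(1,x_0,\cdot)\|_p$ and $v(x_0,R)^{1/p'}$, which you correctly verify is volume-free after splitting on $v(x_0,1)\gtrless 1$. The logarithm thus appears transparently as the scale at which the Gaussian tail absorbs the polynomial growth $t^{D/2}$. One minor remark: the sharp intermediate bound for $I_2$ is $C\,t^{D/2}e^{-c'R^2/t}$ rather than $R^De^{-cR^2/t}$, but since $t^{D/2}\le R^D$ your stated inequality is still valid and the conclusion unchanged. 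Both proofs ultimately rely on the same two ingredients --- the time-$1$ kernel bound and the uniform $L^p$ bound --- and deliver the same exponent; yours is arguably more geometric, the paper's more in the spirit of interpolation.
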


Since $(e^{-t\ddd_k})_{t\ge 0}$ is uniformly bounded on $L^p(\tmk)$ for $p \in (p_0', p_0)$ we obtain immediately assertion $(i)$ of Theorem \ref{theorem2} for $p \in [p_0, \infty]$. The case $p \in [1, p_0']$ is handled by duality.

\begin{proof}
The idea of proof is  similar to \cite{Ouhabaz2}, Th\'eor\`eme 7. Let $t> e$. Since the semigroup $(e^{-t\ddd_k})_{t\ge 0}$ is uniformly bounded on $L^p$, we have 
\begin{equation*}
\int_M |\overrightarrow{p_k}(t,x,y)|^p d\mu(y)\le \|e^{-(t-1)\overrightarrow{\Delta_k}}\|_{p-p}^p \;\| \overrightarrow{p_k}(1,x,.) \|_p^p\le C\int_M|\overrightarrow{p_k}(1,x,y)|^pd\mu(y).
\end{equation*}

\noindent Then using (\ref{upperbound2}) and Lemma \ref{integral} we obtain

$$\int_M|\overrightarrow{p_k}(1,x,y)|^pd\mu(y)\le \frac{C}{v(x,1)^p}\int_M \exp\left(-cp\rho^2(x,y)\right)\le \frac{C}{v(x,1)^{p-1}}.$$

\noindent Hence 
\begin{equation}\label{equa310}
\int_M |\overrightarrow{p_k}(t,x,y)|^p d\mu(y)\le\frac{C}{v(x,1)^{p-1}}.
\end{equation}

\noindent Let $\epsilon\in(0,1)$ and $p_{\epsilon}\in (0,1)$ such that $\frac{1-\epsilon}{p}+\frac{\epsilon}{p_{\epsilon}}=1$, that is $p_{\epsilon}=\frac{\epsilon p}{p-1+\epsilon}$. Using (\ref{upperbound2}) and  Lemma \ref{integral}, we have

$$\int_M |\overrightarrow{p_k}(t,x,y)|^{p_\epsilon} d\mu(y)\le\frac{C t^{\frac{Dp_{\epsilon}}{2}}}{v(x,\sqrt{t})^{p_{\epsilon}}}\int_M \exp\left(-cp_{\epsilon}\frac{\rho^2(x,y)}{t}\right)d\mu(y)\le \frac{C t^{\frac{Dp_{\epsilon}}{2}}}{v(x,\sqrt{t})^{p_{\epsilon}}}v(x,\sqrt{\frac{t}{p_{\epsilon}}}).$$

\noindent Using (\ref{D3}) we it follows that 
\begin{equation}\label{equa311}
\int_M |\overrightarrow{p_k}(t,x,y)|^{p_\epsilon} d\mu(y)\le C t^{\frac{Dp_{\epsilon}}{2}}v(x,\sqrt{t})^{1-p_{\epsilon}}p_{\epsilon}^{-\frac{D}{2}}.
\end{equation}

\noindent From (\ref{equa310}), (\ref{equa311}) and the  H\"older inequality, we deduce that
\begin{align*}
\int_M |\overrightarrow{p_k}(t,x,y)| d\mu(y)
&\le\left(\int_M |\overrightarrow{p_k}(t,x,y)|^p d\mu(y)\right)^{\frac{1-\epsilon}{p}}\left(\int_M |\overrightarrow{p_k}(t,x,y)|^{p_\epsilon} d\mu(y)\right)^{\frac{\epsilon}{p_{\epsilon}}}\\
&\le C \left(\frac{v(x,\sqrt{t})}{v(x,1)}\right)^{\frac{(p-1)(1-\epsilon)}{p}}t^{\frac{\epsilon D}{2}}p_{\epsilon}^{-\frac{D(p-1+\epsilon)}{2p}}\\
&\le Ct^{\frac{D(p-1)}{2p}}t^{\frac{\epsilon D}{2p}}p_{\epsilon}^{-\frac{D(p-1+\epsilon)}{2p}}.
\end{align*}
We have  from the definition of $p_{\epsilon}$ that  $p_{\epsilon}^{-(p-1+\epsilon)}\le C \epsilon^{1-p}$. Hence
\begin{equation}\label{equa312}
\int_M |\overrightarrow{p_k}(t,x,y)| d\mu(y)\le Ct^{\frac{D(p-1)}{2p}}\left[t^{\epsilon}\epsilon^{1-p}\right]^{\frac{D}{2p}}.
\end{equation}

\noindent Noticing that the RHS has its minimum for $\epsilon=\frac{p-1}{\log t}\in (0,1)$ (since $t> e$), we conclude that

$$\int_M |\overrightarrow{p_k}(t,x,y)| d\mu(y)\le C(t\log t)^{\frac{D(p-1)}{2p}},$$

\noindent which is the desired result.
\end{proof}


\noindent In the rest of this  section we assume (\ref{D3}), (\ref{G3}), $R_- \in \hat{K}$.  In addition, we suppose that $R_-$ is $\epsilon$-sub-critical, that is there exists $\epsilon\in[0,1)$ such that
\begin{equation}\tag{S-C}\label{SC3}
0\le (R_-\oo,\oo)\le\epsilon\,(H\oo,\oo), \forall \oo\in\mathcal{C}_0^{\infty}(\tm),
\end{equation}

\noindent where $H=\nabla^*\nabla+ R_+$.  Set $p_0:=\frac{2D}{(D-2)(1-\sqrt{1-\epsilon})}$ and fix $\tilde{p_0} < p_0$.

\begin{prop}\label{proposition41}
For all $t>0$ and $x,y\in M$
\begin{equation}\label{upperbound3}
|\overrightarrow{p_k}(t,x,y)|\le \frac{C(1+ t)^{\frac{D}{\tilde{p}_0}}}{v(x,\sqrt{t})}\exp\left(-c\frac{\rho^2(x,y)}{t}\right).
\end{equation}
\end{prop}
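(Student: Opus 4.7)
The plan is to carry out the Davies exponential perturbation strategy of Proposition \ref{semigaussian1} with the $L^2$-contraction of $(e^{-t\ddd_k})_{t\ge 0}$ replaced by its uniform $L^{\tilde{p_0}}$-boundedness. The latter is available since $\tilde{p_0}\in(p_0',p_0)$, by the perturbation result of \cite{M} recalled at the start of Section \ref{section54}. The refined polynomial exponent $D/\tilde{p_0}$, in place of the $D/2$ that Proposition \ref{semigaussian1} alone would yield, is the payoff of this substitution.

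\textbf{Twisted semigroup and $L^{\tilde{p_0}}$ kernel estimates.} Fix $\phi\in\mathcal{C}_0^\infty(M)$ with $|\nabla\phi|\le 1$ and $\lambda\in\mathbb{R}$, and set $\overrightarrow{T}_{t,\lambda}:=e^{-\lambda\phi}e^{-t\ddd_k}e^{\lambda\phi}$, with integral kernel $\overrightarrow{k}_\lambda(t,x,y)=e^{-\lambda(\phi(x)-\phi(y))}\overrightarrow{p_k}(t,x,y)$. The first task is the twisted $L^{\tilde{p_0}}$-bound
$$\|\overrightarrow{T}_{t,\lambda}\|_{\tilde{p_0}-\tilde{p_0}}\le C\exp(C\lambda^2 t).$$
The generator of $\overrightarrow{T}_{t,\lambda}$ differs from $\ddd_k$ by a first-order drift $-2\lambda\nabla_{\nabla\phi}$ and a zeroth-order term $\lambda^2|\nabla\phi|^2$, both with coefficients controlled by $|\nabla\phi|\le 1$; combining uniform $L^{\tilde{p_0}}$-boundedness with a Miyadera-type perturbation argument gives the bound, the arguments of \cite{M} adapting verbatim from $k=1$ to general $k$. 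Next, following Steps 2 and 3 of Proposition \ref{semigaussian1} with $2$ replaced by $\tilde{p_0}$, iterating this twisted $L^{\tilde{p_0}}$-estimate against the pointwise Gaussian bound of Lemma \ref{lemma32} (twisted as in Step 1 of the proof of Proposition \ref{semigaussian1}) yields
$$\int_M|\overrightarrow{k}_\lambda(t/2,x,y)|^{\tilde{p_0}}d\mu(y)\le\frac{Ce^{C\tilde{p_0}\lambda^2 t}}{v(x,\sqrt{r})^{\tilde{p_0}-1}},$$
where $r=\min(t/2,1/\beta)$ and $\beta=(1/(2c)-1)\lambda^2+\nu$. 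The symmetric bound in the $x$-variable follows by changing $\lambda$ to $-\lambda$ and dualizing to $L^{\tilde{p_0}'}$. The semigroup identity $\overrightarrow{k}_\lambda(t,x,y)=\int\overrightarrow{k}_\lambda(t/2,x,z)\overrightarrow{k}_\lambda(t/2,z,y)d\mu(z)$ combined with H\"older's inequality on the conjugate exponents $\tilde{p_0},\tilde{p_0}'$ then gives
$$|\overrightarrow{k}_\lambda(t,x,y)|\le\frac{Ce^{C\lambda^2 t}}{v(x,\sqrt{r})^{1/\tilde{p_0}'}v(y,\sqrt{r})^{1/\tilde{p_0}}}.$$

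\textbf{Optimization.} Choosing $\lambda=(\phi(y)-\phi(x))/(2Ct)$, taking the supremum over $\phi$ with $|\nabla\phi|\le 1$, and applying (\ref{D3}) to pass from $v(\cdot,\sqrt{r})$ to $v(\cdot,\sqrt{t})$ (at the cost of a factor $(t/r)^{D/2}$, absorbed into the Gaussian because $t/r\lesssim 1+\nu t+\rho^2/t$ at the optimal $\lambda$), together with the doubling comparison $v(y,\sqrt{t})^{-1/\tilde{p_0}}\le C(1+\rho(x,y)/\sqrt{t})^{D/\tilde{p_0}}v(x,\sqrt{t})^{-1/\tilde{p_0}}$ and the identity $1/\tilde{p_0}+1/\tilde{p_0}'=1$, reduces the two volume denominators to a single $v(x,\sqrt{t})$ and yields the target estimate once the remaining $\rho$-polynomial factors are absorbed into a slightly weaker Gaussian exponent.

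\textbf{Main obstacle.} The principal difficulty lies in the twisted $L^{\tilde{p_0}}$-boundedness of Step 1. On $L^2$ this is immediate from form-positivity of the twisted generator $\overrightarrow{A}_\lambda+\lambda^2$ via spectral theory; on $L^{\tilde{p_0}}$ with $\tilde{p_0}\ne 2$, no such form-theoretic shortcut is available, and the bound must be built by combining uniform $L^{\tilde{p_0}}$-boundedness of $(e^{-t\ddd_k})$ with a Miyadera-type control of the first-order drift appearing in $\overrightarrow{A}_\lambda$. The bookkeeping in the final doubling step is also slightly delicate, because arriving at the sharp exponent $D/\tilde{p_0}$ (rather than $D/2$) requires the weighted volume factors $v(x,\sqrt{r})^{1/\tilde{p_0}'}v(y,\sqrt{r})^{1/\tilde{p_0}}$ to combine cleanly through $1/\tilde{p_0}+1/\tilde{p_0}'=1$ into the single denominator $v(x,\sqrt{t})$ of the target bound.
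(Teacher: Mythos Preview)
Your approach has a genuine gap that prevents it from reaching the target exponent $D/\tilde{p_0}$.

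Even granting the twisted bound $\|\overrightarrow{T}_{t,\lambda}\|_{\tilde{p_0}-\tilde{p_0}}\le Ce^{C\lambda^2 t}$, the Davies scheme you outline can only reproduce the exponent $D/2$ of Proposition \ref{semigaussian1}. After the H\"older step you arrive at
\[
|\overrightarrow{k}_\lambda(t,x,y)|\le\frac{Ce^{C\lambda^2 t}}{v(x,\sqrt{r})^{1/\tilde{p_0}'}v(y,\sqrt{r})^{1/\tilde{p_0}}},\qquad r=\min\Big(\tfrac{t}{2},\tfrac{1}{\beta}\Big),\ \beta=\Big(\tfrac{1}{2c}-1\Big)\lambda^2+\nu,
\]
and the passage from scale $\sqrt{r}$ to scale $\sqrt{t}$ via (\ref{D3}) costs exactly $(t/r)^{D(1/\tilde{p_0}'+1/\tilde{p_0})/2}=(t/r)^{D/2}$, since $1/\tilde{p_0}+1/\tilde{p_0}'=1$. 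At the optimal $\lambda$ one has $t/r\lesssim 1+\nu t+\rho^2(x,y)/t$; the $\rho^2/t$ contribution is absorbed by the Gaussian, but the $\nu t$ contribution is \emph{not}, and it produces the polynomial factor $(1+t)^{D/2}$. Your claim that ``$(t/r)^{D/2}$ [is] absorbed into the Gaussian'' overlooks this $\nu t$ term, which is inherited from the exponentially growing bound of Lemma \ref{lemma32} used to initialize the iteration. So the substitution $L^2\to L^{\tilde{p_0}}$ in the Davies argument buys nothing: the total volume exponent in the denominator is always $1$, and the doubling loss is always $D/2$.

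Separately, the twisted $L^{\tilde{p_0}}$-bound itself is not justified. The difference $\overrightarrow{A}_\lambda-\ddd_k$ contains the first-order drift $-2\lambda\nabla_{\nabla\phi}$, whereas Miyadera's theorem (as used in Lemma \ref{lemma31} via \cite{voigt}) treats only zeroth-order absorption potentials; controlling a first-order perturbation on $L^{\tilde{p_0}}$ would require an $L^{\tilde{p_0}}$ gradient bound for $e^{-t\ddd_k}$ on forms, which is not available.

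The paper takes a different route that sidesteps both issues. It first establishes the weighted on-diagonal estimate $\|v(\cdot,\sqrt{t})^{1/2}e^{-t\ddd_k}\|_{2-\infty}\le C(1+t)^{D/(2\tilde{p_0})}$ by factoring $e^{-t\ddd_k}$ through $L^{\tilde{p_0}}$: a uniformly bounded weighted $L^2\!-\!L^{\tilde{p_0}}$ piece (from the off-diagonal estimates of \cite{M}), a uniformly bounded $L^{\tilde{p_0}}\!-\!L^{\tilde{p_0}}$ piece, and an $L^{\tilde{p_0}}\!-\!L^\infty$ piece at unit time coming from Theorem \ref{theorem1}. The sharp exponent arises because only the weight $v(\cdot,\sqrt{t})^{1/\tilde{p_0}}$ must be compared to $v(\cdot,1)^{1/\tilde{p_0}}$ via (\ref{D3}), costing $t^{D/(2\tilde{p_0})}$ rather than $t^{D/2}$. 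The Gaussian spatial decay is then imported wholesale from \cite{coulhonsikora}, Corollary 4.5, applied with the modified volume function $V(x,t)=v(x,t)/(1+t)^{2D/\tilde{p_0}}$; no Davies twisting on $L^{\tilde{p_0}}$ is needed.
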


\begin{proof}
We proceed in three  steps. \\

\noindent \underline{Step 1}.  We show the $L^2-L^p$ estimates
\begin{equation}\label{equa38}
\underset{t>0}{\sup}\,\|e^{-t\overrightarrow{\Delta_k}}v(.,\sqrt{t})^{\frac{1}{2}-\frac{1}{p}}\|_{2-p}\le C
\end{equation}

\noindent for all $p\in[2,p_0)$. 

\noindent Let $j\in \mathbb{N}$ and let $A(x,\sqrt{t}, j)$ be  the annulus $B(x,(j+1)\sqrt{t})\setminus B(x,j\sqrt{t})$. Following the proof of \cite{M} Theorem 4.1 leads to the following $L^q-L^2$ off-diagonal estimates

$$\|\chi_{B(x,\sqrt{t})}e^{-t\overrightarrow{\Delta_k}}\chi_{A(x,\sqrt{t}, j)}\|_{q-2}\le \frac{C}{v(x,\sqrt{t})^{\frac{1}{q}-\frac{1}{2}}}e^{-cj^2}$$

\noindent for all $q\in(p_0',2]$. This implies that  for all $q\in(p_0',2]$

$$\underset{t>0}{\sup}\,\|v(.,\sqrt{t})^{\frac{1}{q}-\frac{1}{2}}e^{-t\overrightarrow{\Delta_k}}\|_{q-2}\le C.$$
See \cite{ao} Proposition 2.9. \\


\noindent \underline{Step 2}.  We prove that for all $t>0$
\begin{equation}\label{equa39}
\|v(.,\sqrt{t})^{\frac{1}{2}}e^{-t\overrightarrow{\Delta_k}}\|_{2-\infty}\le C (1+t)^{\frac{D}{2\tilde{p}_0}}.
\end{equation}

\noindent Let $0<t\le 1$. Using Theorem \ref{theorem1} and Lemma \ref{integral}, we  obtain easily 

$$\|v(.,\sqrt{t})^{\frac{1}{2}}e^{-t\overrightarrow{\Delta_k}}\|_{2-\infty}\le C\le C (1+t)^{\frac{D}{2\tilde{p}_0}}.$$

\noindent We now consider $t>1$. Since $\overrightarrow{\Delta_k}$ satisfies the $L^2-L^2$ Davies-Gaffney estimates (see e.g., \cite{S}, Theorem 6), a consequence  of \cite{bcs}, Proposition 4.1.6 is 

$$\|v(.,\sqrt{t})^{\frac{1}{2}}e^{-t\overrightarrow{\Delta_k}}\|_{2-\infty}\le C\|v(.,\sqrt{t})^{\frac{1}{\tilde{p}_0}}e^{-t\overrightarrow{\Delta_k}}v(.,\sqrt{t})^{\frac{1}{2}-\frac{1}{\tilde{p}_0}}\|_{2-\infty},$$ 

\noindent with $C$ independent of $t$. The semigroup property then gives

$$\|v(.,\sqrt{t})^{\frac{1}{2}}e^{-t\overrightarrow{\Delta_k}}\|_{2-\infty}\le C \|v(.,\sqrt{t})^{\frac{1}{\tilde{p}_0}}e^{-\frac{t}{2}\overrightarrow{\Delta_k}}\|_{\tilde{p}_0-\infty}\|e^{-\frac{t}{2}\overrightarrow{\Delta_k}}v(.,\sqrt{t})^{\frac{1}{2}-\frac{1}{\tilde{p}_0}}\|_{2-\tilde{p}_0}.$$

\noindent We use (\ref{equa38}) and (\ref{D3}) to obtain 

$$\|v(.,\sqrt{t})^{\frac{1}{2}}e^{-t\overrightarrow{\Delta_k}}\|_{2-\infty}\le C\|v(.,\sqrt{t/2})^{\frac{1}{\tilde{p}_0}}e^{-\frac{t}{2}\overrightarrow{\Delta_k}}\|_{\tilde{p}_0-\infty}.$$

\noindent Using again (\ref{D3}) and the semigroup property leads to

$$\|v(.,\sqrt{t})^{\frac{1}{2}}e^{-t\overrightarrow{\Delta_k}}\|_{2-\infty}\le C t^{\frac{D}{2\tilde{p}_0}}\|v(.,\sqrt{1/2})^{\frac{1}{\tilde{p}_0}}e^{-\frac{1}{2}\overrightarrow{\Delta_k}}\|_{\tilde{p}_0-\infty}\|e^{-\left(\frac{t}{2}-\frac{1}{2}\right)\overrightarrow{\Delta_k}}\|_{\tilde{p}_0-\tilde{p}_0}.$$

\noindent As in  \cite{M} Theorem 3.1, we prove that the semigroup $(e^{-t\overrightarrow{\Delta_k}})_{t\ge 0}$ is uniformly bounded on $L^{\tilde{p}_0}$ (the statement in \cite{M} is for $k=1$). Therefore 
\begin{align*}
\|v(.,\sqrt{t})^{\frac{1}{2}}e^{-t\overrightarrow{\Delta_k}}\|_{2-\infty}
&\le C t^{\frac{D}{2\tilde{p}_0}}\|v(.,\sqrt{1/2})^{\frac{1}{\tilde{p}_0}}e^{-\frac{1}{2}\overrightarrow{\Delta_k}}\|_{\tilde{p}_0-\infty}\\
&\le C(1+t)^{\frac{D}{2\tilde{p}_0}},
\end{align*}

\noindent where we use Theorem \ref{theorem1} and Lemma \ref{integral} to obtain the last inequality.  This concludes the proof of (\ref{equa39}).\\

\noindent \underline{Step 3}.  We finish the proof  by using  \cite{coulhonsikora}, Corollary 4.5 with 
$$V(x,t):= \frac{v(x,t)}{ (1+t)^{2D/\tilde{p_0}}}.$$
\end{proof}

Assertion $(iii)$ of Theorem \ref{theorem2} is exactly the statement of the previous proposition. Assertion $(iv)$ follows from the same proposition and Proposition \ref{semigaussian2}. The proof of assertion $(ii)$ for $p \ge p_0$ is similar to the proof of Proposition \ref{gradient}, one has to use   (\ref{upperbound3}) instead of (\ref{upperbound2}) and interpolation with 
\begin{equation}\label{interpolation4}
\|\nabla e^{-t\Delta}\|_{\tilde{p}_0-\tilde{p}_0}\le\frac{C}{\sqrt{t}}.
\end{equation}
This latter estimate is a consequence of the boundedness of the Riesz transform on $L^p$ for $p < p_0$ (cf. \cite{M}.
The proof of Theorem \ref{theorem2} is complete.   \\

Finally, we prove Proposition \ref{proposition1}.
\begin{proof}[Proof of Proposition \ref{proposition1}]
Under the assumption of the proposition it is proved in \cite{CMO}, Theorem 3.3 that the Riesz transform $d^* \ddd^{-1/2}$ is bounded from  $H^r_{\ddd}(\tm)$ to $L^r(M)$ for all $r \in (1, 2]$.  The  space $H^p_{\ddd}(\tm)$ is a Hardy space associated with $\ddd$. On the other hand, the semigroup $(e^{-t\ddd})_{t \ge0}$ satisfies off-diagonal estimates
\begin{equation}\label{offdiag}
\| \chi_{B(x, \sqrt{t})} e^{-t\ddd} \chi_{B(y,\sqrt{t})} \|_{r-2} \le C v(x,\sqrt{t})^{\frac{1}{2}- \frac{1}{r}} e^{-c \frac{\rho^2(x,y)}{t}}, \ t > 0, x, y \in M.
\end{equation}
This estimate is shown in \cite{M} for all $r \in (p_0', 2]$. The  limitation $r > p_0'$ follows from the fact that the semigroup $(e^{-t\ddd})_{t \ge0}$ is known to be  uniformly bounded on $L^r(\tm)$ only for $r \in (p_0', p_0)$. Now if the semigroup $(e^{-t\ddd})_{t \ge0}$ is uniformly bounded on $L^p(\tm)$ (and so by duality on $L^{p'}(\tm)$) then \eqref{offdiag} holds for all $r \in (\min(p,p'), 2]$. It is known that the validity of \eqref{offdiag} implies that $L^r(\tm)$ and  $H^r_{\ddd}(\tm)$ coincide and have equivalent norms (see Proposition 2.5 in \cite{CMO} and references there). This implies that $d^* \ddd^{-1/2}$  is bounded from $L^r(\tm)$
to $L^r(M)$ for all $r \in (\min(p,p'), 2]$. By duality and the commutation formula $d \Delta = \ddd d$ the Riesz transform $d \Delta^{-1/2}$ is bounded from 
$L^r(M)$ to $L^r(\tm)$ for all $r \in [2, \max(p,p'))$. The boundedness for $r \in (1, 2)$ is well known (cf. \cite{could}). 
\end{proof}

\noindent Institut de Math\'ematiques de Bordeaux, UMR 5251\\
Universit\'e de Bordeaux,\\
351,  Cours de la Lib\'eration 33405 Talence. France.\\
Jocelyn.Magniez@math.u-bordeaux.fr, \ Elmaati.Ouhabaz@math.u-bordeaux.fr


\begin{thebibliography}{99}

\bibitem{assaad}
J. Assaad,
Riesz transforms associated to Schr\"odinger operators with negative potentials,
\textit{Publ. Mat.},
55(1) (2011)  123-150. 



\bibitem{ao}
J. Assaad and E.M.  Ouhabaz,
Riesz transforms of Schr\"odinger operators on manifolds,
\textit{J. Geom. Anal.},
22(4) (2012)  1108-1136. 







\bibitem{acdh}
P. Auscher, Th. Coulhon, X.T. Duong and S. Hofmann,
Riesz transform on manifolds and heat kernel regularity,
\textit{Ann. Sci. Ecole Norm. Sup. (4)},
37(6) (2004)  911-957. 




\bibitem{B}
D. Bakry,
Etude des transformations de Riesz dans les vari\'et\'es riemanniennes \`a courbure de Ricci minor\'ee, In
\textit{S\'eminaire de Probabilit\'es, XXI}, volume 1247 of
\textit{Lecture Notes in Math.},
pages 137-172.
Springer, Berlin, 1987.


\bibitem{Berard}
P. B\'erard, From vanishing theorems to estimating theorems: the Bochner technique revisited, \textit{Bull. A.M.S.} 19 (1988), no 2, 371-406.
\bibitem{bcs}
S. Boutayeb, Th. Coulhon and A. Sikora, A new approach to pointwise heat kernel upper bounds on doubling metric measure spaces, \textit{Adv. Math.}, 270 : (2015) 302-374. 












\bibitem{C2}  G. Carron, Riesz  transform on manifolds with quadratic curvature decay,
2014, available at http://arxiv.org/abs/1403.6278. 



\bibitem{carron}
G. Carron, Th.  Coulhon and A.  Hassell,
Riesz transform and $L^p$-cohomology for manifolds with Euclidean ends,
\textit{Duke Math. J.},
133(1) (2006)  59-93. 






\bibitem{CMO} P. Chen, J. Magniez and E.M.  Ouhabaz, The Hodge-de Rham Laplacian and $L^p$-boundedness of Riesz transforms on non-compact manifolds, \textit{Nonlinear Analysis TMA}, 125 (2015) 78-98. 





\bibitem{CDS}
Th. Coulhon, B.  Devyver and A. Sikora, Gaussian heat kernel estimates: from functions to forms,  2016 available at http://arxiv.org/abs/1606.02423. 



\bibitem{could}
Th.  Coulhon and X.T.  Duong,
Riesz transforms for $1\leq p\leq 2$,
\textit{Trans. Amer. Math. Soc.},
351(3) (1999) 1151-1169. 





\bibitem{coulhonduong}
Th. Coulhon and X.T. Duong,
Riesz transform and related inequalities on noncompact Riemannian manifolds,
\textit{Comm. Pure Appl. Math.},
56(12) (2003) 1728-1751. 


\bibitem{coulhonsikora}
Th. Coulhon and A. Sikora,
Gaussian heat kernel bounds via the Phragm\`en-Lindel\"of theorem,
\textit{Proc. London Math. Soc.},
96 (2008) 507-544. 



\bibitem{coulz}
Th. Coulhon and Qi S. Zhang,
Large time behavior of heat kernels on forms,
\textit{J. Differential Geom.},
77(3)  (2007) 353-384. 





\bibitem{davies}
E. B. Davies,
\textit{Heat Kernels and Spectral Theory}, volume 92 of
\textit{Cambridge Tracts in Mathematics}.
Cambridge University Press,
Cambridge, 1989.




\bibitem{DP} E. B. Davies and M. M. H. Pang,  Sharp heat kernel bounds for some Laplace operators, {\it Quart. J.
Math. Oxford Ser.}  (2), 40, 159 (1989) 281–290. 




\bibitem{daviessimon}
E. B. Davies and B.  Simon, $L^p$ norms of non-critical Schr\"odinger semigroups, \textit{J. Funct. Anal.}, 102 (1991)  95-115.


\bibitem{davies97}
E.B. Davies, Non-Gaussian aspects of heat kernel behaviour,  \textit{J. London Math. Soc.} 55 (2) (1997) 105-125.




\bibitem{D}
B. Devyver,
A Gaussian estimate for the heat kernel on differential forms and application to the Riesz transform,
\textit{Math. Ann.},
358, no.1-2 (2004)  25-68. 




\bibitem{Gr2}  A. Grigor'yan, Upper bounds of derivatives of heat kernels on arbitrary complete manifolds, \textit{J. Funct. Anal.} 127 (1995) 363-389.


\bibitem{Gr1} A. Grigor’yan, Gaussian upper bounds for the heat kernel on arbitrary manifolds, {\it J.Differential Geom.}
  45(1) (1997) 33–52.





 



 
\bibitem{grigor}
A.  Grigor'yan,
\textit{Heat Kernel and Analysis on Manifolds}, volume 47 of
\textit{AMS/IP Studies in Advanced Mathematics}.
American Mathematical Society,
Providence, RI,
2009.


\bibitem{HSU}  H. Hess, R. Schrader and D.A. Uhlenbrock, Domination of semigroups and generalizations of Kato's inequality,   \textit{Duke M. J.} 44 (4) (1977) 
893-904. 




















	
	
	
	
	









\bibitem{M} J.  Magniez, Riesz transforms of the Hodge-de Rham Laplacian on Riemannian manifolds,  \textit{Math.  Nach.} 289 (2016), no. 8-9, 1021-1043.





\bibitem{ouhabaz}
E.M.  Ouhabaz,
\textit{Analysis of Heat Equations on Domains}, 
\textit{London Mathematical Society Monographs Series, Vol. 31}.
Princeton University Press,
Princeton, NJ, 2005.




\bibitem{Ouhabaz2} E.M.  Ouhabaz, 
Comportement des noyaux de la chaleur des opérateurs de Schr\"odinger et applications à certaines équations paraboliques semi-linéaires, 
\textit{J. Funct. Anal.},
238 (2006)  278-297. 


\bibitem{Ou98} E.M.  Ouhabaz, 
$L^p$-contraction semigroup for vector-valued functions, 
\textit{Positivity} 3 (1999) n. 1,  83-93. 












\bibitem{S}
A. Sikora,
Riesz transform, Gaussian bounds and the method of wave equation,
\textit{Math. Z.}, 247, no.3 (2004)  643-662. 






\bibitem{Simon} 
B. Simon, 
Schr\"odinger semigroups, 
\textit{Bull. Amer. Math. Soc.}, 7  (1982) 447-526. 












\bibitem{strichartz}
R. S. Strichartz,
Analysis of the Laplacian on the complete Riemannian manifold,
\textit{J. Funct. Anal.},
52, no.1 (1983)  48-79. 





\bibitem{T} M.  Takeda,
Gaussian bounds of heat kernels for Schr\"odinger operators on
Riemannian manifolds, {\it Bull. Lond. Math. Soc.} {\bf 39} (2007),
no. 1, 85--94.











\bibitem{voigt}
J. Voigt, Absorption semigroups, their generators, and Schr\"odinger semigroups, \textit{J. Funct. Anal.}, 67 (1986) 167-205. 




\end{thebibliography}
\end{document}